\newtheorem{thm}{Theorem}[section]
\newtheorem{lemma}[thm]{Lemma}
\newtheorem{defn}[thm]{Definition}
\newtheorem{prop}[thm]{Proposition}
\newtheorem{assumption}[thm]{Assumption}
\newcommand{\R}{\mathbb R}
\newcommand{\C}{\mathcal C}
\newcommand{\I}{\mathcal I}
\newcommand{\V}{\mathcal V}
\newcommand{\OO}{\mathcal O}
\newcommand{\A}{\mathscr A}
\newcommand{\s}{\subset}
\newcommand{\sm}{\setminus}
\newcommand{\vphi}{\varphi}
\newcommand{\into}{\lrcorner \,}
\DeclareMathOperator{\WF}{WF}
\DeclareMathOperator{\inj}{inj}
\DeclareMathOperator{\pv}{p{.}v.}
\DeclareMathOperator{\supp}{supp}
\DeclareMathOperator{\Range}{Range}
\title[Microlocal Analysis of the L\'evy Generator]{A Microlocal Analysis of the L\'evy Generator with Conjugate Points}
\author[K.\@ Tully]{Kevin Tully}
\date{}
\begin{document}
\maketitle


\begin{abstract}
    We analyze the microlocal structure of the infinitesimal generator of a L\'evy process on a closed Riemannian manifold when conjugate points are allowed. We show that if there are no singular conjugate pairs, then the infinitesimal generator can be written as a sum of pseudodifferential operators and Fourier integral operators. This extends and unifies known results for the flat torus, the sphere, and Anosov manifolds.
\end{abstract}


\section{Introduction}

In this paper, we will study the infinitesimal generator of a L\'evy process on a Riemannian manifold $(M, g)$. Loosely speaking, a L\'evy process on a manifold looks like Brownian motion interlaced with jumps along geodesics at random times. The infinitesimal generator encodes certain information about the process and reflects the geometry of $(M, g)$. For example, the generator of the most famous L\'evy process, Brownian motion, is the Laplace-Beltrami operator. Though Brownian motion on manifolds is well studied \cite{elworthy1988geometric, hsu2002stochastic}, the theory for more general L\'evy processes has received less attention. Hunt introduced L\'evy processes on Lie groups \cite{hunt1956semi}, while Gangolli initiated the study of symmetric spaces \cite{gangolli1964isotropic, gangolli1965sample}. Applebaum and Estrade were the first to construct L\'evy processes on arbitrary Riemannian manifolds, under a natural assumption on the L\'evy measure \cite{applebaum2000isotropic}. Our goal is to analyze the microlocal structure of the infinitesimal generator of this L\'evy process.

One motivation for studying L\'evy processes on manifolds is to shed light on the L\'evy flight foraging hypothesis, which is the foundation of several biological models \cite{viswanathan1996levy, bressloff2013stochastic} and search algorithms \cite{yang2009cuckoo, yang2010eagle, heidari2017efficient, kaidi2022dynamic}. This controversial hypothesis claims that L\'evy processes are a better model of animal foraging behavior than Brownian motion, in the sense that they can optimize search efficiencies \cite{shlesinger1986levy, viswanathan1999optimizing}. Even though the underlying geometry is often curved, until recently this topic had only been studied in Euclidean space. The expected time for a pure jump L\'evy process to find a small target on a manifold was first studied in \cite{chaubet2022geodesic}, while \cite{nursultanov2021mean, nursultanov2022narrow, nursultanov2023narrow} considered Brownian motion. A numerical comparison of \cite{chaubet2022geodesic} and \cite{nursultanov2023narrow} was performed in \cite{tzou2023challenging}, confirming that Brownian motion might be the faster search strategy for small targets on the $2$-torus. This suggests that the underlying geometry could be an important factor in determining whether the L\'evy flight foraging hypothesis is valid in a given context. 

In \cite{chaubet2022geodesic}, conjugate points are one of the main geometric influences on the expected stopping time. Specifically, the authors show that on the sphere the expected stopping time exhibits singular behavior at antipodal points, but no such anomaly occurs on the flat torus or Anosov manifolds. (We will call $(M, g)$ an Anosov manifold if its geodesic flow is an Anosov flow on its unit sphere bundle \cite{anosov1969geodesic, knieper2002hyperbolic}, or, equivalently, if $g$ lies in the $C^2$ interior of the set of metrics on $M$ without conjugate points \cite{ruggiero1991creation}.) Conjugate points exert influence on the expected stopping time through the microlocal structure of the L\'evy generator (i.e., the infinitesimal generator of the pure jump L\'evy process). On the sphere the L\'evy generator decomposes into a pseudodifferential operator and a Fourier integral operator, while on the flat torus and Anosov manifolds it is simply a pseudodifferential operator. Inspired by these results, as well as the microlocal analysis of the geodesic X-ray transform done in \cite{stefanov2012geodesic, monard2015geodesic, holman2018microlocal}, we prove a similar theorem in a more general geometric context.

Our main result, Theorem \ref{mainThm}, shows that if there are no singular conjugate pairs, then the L\'evy generator equals a sum of pseudodifferential operators and Fourier integral operators. Each Fourier integral operator is associated with conjugate pairs of a given order, and the order of the operator depends on the order of those conjugate pairs and the dimension of $M$. The canonical relation of each Fourier integral operator is related to the geometry of the set of conjugate pairs. When $(M, g)$ is the sphere or an Anosov manifold we recover the results of \cite{chaubet2022geodesic}. Our theorem also covers all possibilities in two dimensions.

The structure of the paper is as follows. In Section \ref{preliminaries} we discuss the necessary background, including an initial decomposition of the L\'evy generator into pseudodifferential operators and a remainder term. In Section \ref{mainSection} we prove our main result by showing that the remainder term equals a smoothing operator plus a sum of Fourier integral operators.


\subsection*{Acknowledgements}

The author would like to thank Gunther Uhlmann for proposing this problem and for his consistent support, guidance, and patience. This material is based upon work supported by the National Science Foundation Graduate Research Fellowship under Grant No.\@ DGE-2140004.


\section{Preliminaries} \label{preliminaries}

This section introduces our notation, provides the main definitions, and recalls a result from \cite{chaubet2022geodesic} that will guide our analysis of the L\'evy generator. Throughout the paper, we will assume $(M, g)$ satisfies the following condition.

\begin{assumption}
    $(M, g)$ is a smooth, closed (that is, compact without boundary), and connected Riemannian manifold of dimension $n \geq 2$.
\end{assumption}


\subsection{The L\'evy Generator}

Before defining the L\'evy generator, let us specify our notation for vector bundles and introduce the exponential map. 

Unless stated otherwise, we will denote the projection map associated with a vector bundle by $\pi$ with a subscript indicating the total space of the bundle (e.g., $\pi_{TM}$ is the projection map from the tangent bundle $TM$ to $M$). When referring to a point in a vector bundle, one must decide whether to specify the base point. For example, should a point in $TM$ be denoted by $(x, v) \in TM$ or simply $v \in TM$? We will often omit the base point, but we will include it when there is potential for confusion. We do follow the convention that $(x, v) \in TM$ means $v \in TM$ and $x = \pi_{TM}(v)$, and similarly for points in other vector bundles.

For each $v \in TM$, let $\gamma_v$ be the maximal geodesic with initial data $\dot\gamma_v(0) = v$. Since we view $\dot\gamma_v(0)$ as a point in $TM$, this initial data includes (loosely speaking) both the initial position and velocity of the geodesic. Each geodesic $\gamma_v$ is defined on all of $\R$ because $M$ is compact. Thus we can define the exponential map $\exp : TM \to M$ by
\begin{equation*}
    \exp(v) = \gamma_v(1).
\end{equation*}
For each $x \in M$, the restriction of the exponential map to $T_x M$ will be denoted by $\exp_x$.

Fix $\alpha \in (0, 1)$. It is well known that the fractional Laplacian of order $\alpha$ is the infinitesimal generator of a $2\alpha$-stable jump process on Euclidean space. To ensure that the L\'evy generator is consistent with the fractional Laplacian, we will include the constant
\begin{equation*}
    C_{n, \alpha} = \frac{4^\alpha\Gamma(n / 2 + \alpha)}{\pi^{n / 2}|\Gamma(-\alpha)|}.
\end{equation*}

\begin{defn} \label{Adef}
    The \textbf{L\'evy generator $\pmb\A$} is defined for $u \in C^\infty(M)$ and $x \in M$ by
    \begin{equation*} 
        (\A u)(x) = C_{n, \alpha} \, \pv\int_{T_x M \sm \{ 0 \}} \frac{u(\exp_x(v)) - u(x)}{|v|_g^{n + 2\alpha}} \, dT_x(v),
    \end{equation*}
    where $dT_x$ is the Riemannian density of $T_x M$ induced by the metric $g|_{T_x M}$.
\end{defn}


\subsection{Averaging Along the Geodesic Flow}

This subsection introduces one of the operators which will appear in our initial decomposition of the L\'evy generator.

Let $SM$ be the unit sphere bundle over $M$. Its fiber over a point $x \in M$ is
\begin{equation*}
    S_x M := \{ v \in T_x M : |v|_g = 1 \}.
\end{equation*}
Since $M$ is compact and $n$-dimensional, $SM$ is a compact manifold of dimension $2n - 1$, and it is an embedded submanifold of $TM$. We will write $\pi$ rather than $\pi_{SM}$ for the projection map $SM \to M$, and we will use $\iota_{SM}$ for the inclusion map $\iota_{SM} : SM \hookrightarrow TM$.

The geodesic flow on $SM$ is the smooth map $\Phi : SM \times \R \to SM$ defined by
\begin{equation*}
    \Phi(v, s) = \dot\gamma_v(s).
\end{equation*}
It is a smooth submersion because $\Phi(\cdot, s) : SM \to SM$ is a diffeomorphism for each $s \in \R$. We will denote the differential of the map $\Phi(\cdot, s)$ at a vector $v \in SM$ by $d_v\Phi|_{(v, s)}$. Also, the geodesic flow on $TM$ will be denoted by $\tilde\Phi$.

Since $(M, g)$ is a closed Riemannian manifold, its injectivity radius $r_{\inj}$ is positive and finite. Choose a bump function $\chi \in C_c^\infty(\R)$ satisfying $\chi(s) = 1$ for $|s| < r_{\inj}^2 / 4$ and $\chi(s) = 0$ for $|s| > r_{\inj}^2 / 2$. Let
\begin{equation*}
    a(s) = 
    \begin{cases}
        (1 - \chi(s^2))s^{-1 - 2\alpha} & \text{if} \ s \geq 0, \\
        0 & \text{if} \ s \leq 0.
    \end{cases}
\end{equation*}
Then we can define an operator $R_a : C^\infty(SM) \to C(SM)$ by
\begin{equation} \label{R_a}
    (R_a f)(v) = C_{n, \alpha} \int_\R a(s)f(\Phi(v, s)) \, ds.
\end{equation}
We may think of $R_a f$ as a certain average of $f$ along the geodesic flow. This operator will play a crucial role in our analysis of the L\'evy generator.


\subsection{Pushforward and Pullback by a Smooth Submersion}

Next we will define the pushforward and pullback and set our notation for Fourier integral operators.

\begin{defn} \label{pushPullDef}
    Let $X$ and $Y$ be smooth manifolds of dimension $n_X$ and $n_Y$, respectively, with smooth positive densities $dx$ and $dy$. Let $F : X \to Y$ be a smooth submersion. Then the \textbf{pushforward} $F_* : C_c^\infty(X) \to C_c^\infty(Y)$ and the \textbf{pullback} $F^* : C_c^\infty(Y) \to C^\infty(X)$ are defined by the requirement that
    \begin{equation*}
        \int_Y (F_*\vphi)(y)\psi(y) \, dy = \int_X \vphi(x)\psi(F(x)) \, dx = \int_X \vphi(x)(F^*\psi)(x) \, dx
    \end{equation*}
    for all $\vphi \in C_c^\infty(X)$ and $\psi \in C_c^\infty(Y)$. 
\end{defn}

Explicitly, the pushforward by $F$ integrates over the level sets of $F$, while the pullback precomposes with $F$. 

We would like to use Definition \ref{pushPullDef} to express $R_a$ as a composition of simpler operators. Let $\tilde a^m$ be the operator which multiplies by the smooth function
\begin{equation*}
    \tilde a(v, s) := C_{n, \alpha} \, a(s).
\end{equation*}
Then $\tilde a^m$ is a properly supported pseudodifferential operator of order $0$ on $SM \times \R$. Let
\begin{equation*}
    p : SM \times \R \to SM
\end{equation*}
be the projection map. Since $p_*$ integrates a function over $\R$, \eqref{R_a} suggests that $R_a$ equals the composition $p_* \circ \tilde a^m \circ \Phi^*$. This is not quite correct since $\tilde a^m \circ \Phi^*f$ may not have compact support, even if $f \in C^\infty(SM)$. But we can use a partition of unity to define the composition of $p_*$ and $\tilde a^m \circ \Phi^*$, and this is how we will view the operator $R_a$.

Recall that if $F : X \to Y$ is a smooth map between smooth manifolds, then for each $x \in X$ the differential $dF|_x : T_x X \to T_{F(x)} Y$ yields a dual linear map
\begin{equation*}
    dF|_x^t : T_{F(x)}^*Y \to T_x^*X.
\end{equation*}
Since $dF|_x^t$ is given in coordinates by the transpose of the matrix of $dF|_x$, if $F$ is a smooth submersion then $dF|_x^t$ is injective for all $x \in X$.

It is known, going back at least to \cite{guillemin1975fourier}, that the pushforward and pullback by a smooth submersion are both Fourier integral operators. We will state the version from Lemma $1$ of \cite{holman2018microlocal} to have this result in the precise format we need.

\begin{lemma} \label{pushPullLemma}
    Suppose we are in the setting of Definition \ref{pushPullDef}. Then the pushforward $F_*$ and the pullback $F^*$ are both Fourier integral operators of order $(n_Y - n_X) / 4$. The canonical relation of $F_*$ is
    \begin{equation*}
        C_{F_*} = \left\{ \left( \eta, dF|_x^t \, \eta \right) : x \in X, \ \eta \in T_{F(x)}^* Y \sm \{ 0 \} \right\},
    \end{equation*}
    while the canonical relation of $F^*$ is
    \begin{equation*}
        C_{F^*} = \left\{ \left( dF|_x^t \, \eta, \eta \right) : x \in X, \ \eta \in T_{F(x)}^* Y \sm \{ 0 \} \right\}.
    \end{equation*}
\end{lemma}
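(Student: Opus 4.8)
The plan is to reduce the statement about $F_*$ and $F^*$ to the model case of a projection, for which the assertion can be checked directly, and then transfer it along local diffeomorphisms. First I would recall that $F^*$ and $F_*$ are formal transposes of one another with respect to the fixed densities $dx$ and $dy$ (this is exactly the defining identity in Definition \ref{pushPullDef}); since the transpose of a Fourier integral operator of order $m$ with canonical relation $C$ is a Fourier integral operator of the same order with canonical relation $C' = \{(\xi, x; \eta, y) : (\eta, y; \xi, x) \in C\}$, and since the two relations $C_{F_*}$ and $C_{F^*}$ in the statement are precisely transposes of one another, it suffices to prove the claim for just one of them, say the pullback $F^*$.

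Next I would localize. Because $F$ is a submersion, the constant-rank (submersion) normal form lets me cover $X$ by charts in which, after composing with diffeomorphisms of the source and target, $F$ becomes the standard projection $\pi : \R^{n_Y} \times \R^{n_X - n_Y} \to \R^{n_Y}$, $\pi(y, z) = y$. A partition of unity subordinate to such a cover writes $F^*$ as a locally finite sum of operators each conjugate (by the pullbacks/pushforwards of the chart diffeomorphisms, which are elliptic Fourier integral operators of order $0$ associated with canonical graphs, hence harmless for the order and transform the canonical relation by the expected symplectomorphisms) to $\pi^*$ composed with cutoffs. So the whole problem comes down to: $\pi^*$ is a Fourier integral operator of order $(n_Y - n_X)/4 = -(n_X - n_Y)/4$ with the stated canonical relation. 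For the projection, $(\pi^*\psi)(y, z) = \psi(y)$, and I would exhibit this as an oscillatory integral
\begin{equation*}
    (\pi^*\psi)(y, z) = (2\pi)^{-n_Y}\int_{\R^{n_Y}}\int_{\R^{n_Y}} e^{i(y - y')\cdot\eta}\,\psi(y')\,dy'\,d\eta,
\end{equation*}
whose phase $\phi(y, z, y', \eta) = (y - y')\cdot\eta$ has no $z$ or $z'$ dependence; computing the stationary set and the associated Lagrangian $\{(d_{(y,z)}\phi, -d_{(y',z')}\phi)\} = \{((\eta, 0), (\eta, 0)) : \ldots\}$ gives exactly $C_{\pi^*} = \{(\pi|^t\eta, \eta)\}$ since $d\pi^t$ here is the inclusion $\eta \mapsto (\eta, 0)$. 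The order is read off from the standard bookkeeping: $n_Y$ phase variables $\eta$, amplitude of order $0$, acting between manifolds of dimensions $n_X$ and $n_Y$, giving order $n_Y/2 - (n_X + n_Y)/4 = (n_Y - n_X)/4$.

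Alternatively — and this is probably the cleaner write-up — I would avoid re-deriving the model computation and instead simply cite Lemma 1 of \cite{holman2018microlocal} (or the corresponding statement in \cite{guillemin1975fourier}), since the excerpt explicitly says we are "stating the version from" that source; then the only thing to verify is that our normalization of densities, pushforward, and pullback matches theirs, which is immediate from Definition \ref{pushPullDef}. The one genuine subtlety to be careful about is the clean-intersection/locally-finite-sum issue: $F^*\psi$ need not have compact support, so (exactly as in the discussion of $R_a$ preceding the lemma) the operator is defined via a partition of unity, and I must note that properness of the cutoffs makes the sum locally finite and independent of the choices, so that the Fourier-integral-operator property and the canonical relation are well defined globally. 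That bookkeeping — rather than any deep microlocal input — is the main thing to get right; the core geometric content (the Lagrangian is the conormal-type relation cut out by $dF^t$) is forced by the projection model.
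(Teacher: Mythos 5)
The paper does not actually prove Lemma~\ref{pushPullLemma}; it imports it verbatim from Lemma~1 of \cite{holman2018microlocal} (with a pointer to \cite{guillemin1975fourier}), so any genuine proof you supply is, by construction, taking a different route than the paper. Your argument is correct and is the standard one: reduce to $F^*$ by the transposition identity in Definition~\ref{pushPullDef} (noting the two displayed canonical relations are each other's inverses/transposes, and that transposition preserves the order), use the submersion normal form to reduce to the coordinate projection $\pi(y, z) = y$, represent the Schwartz kernel of $\pi^*$ as the oscillatory integral with phase $(y - y')\cdot\eta$ in $n_Y$ phase variables and amplitude $1$, and read off both the order $0 + n_Y/2 - (n_X + n_Y)/4 = (n_Y - n_X)/4$ and the Lagrangian $\{((\eta,0), \eta)\}$, which is $\{(d\pi^t\eta, \eta)\}$. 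Conjugating by the chart diffeomorphisms (FIOs of order $0$ associated with canonical graphs) transports order and canonical relation correctly, and a partition of unity patches the local statements. This makes the lemma self-contained, at the cost of a page; the paper opts for the cheaper citation since the result is classical.

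One small calibration: the subtlety you flag about $F^*\psi$ lacking compact support is not actually an obstacle to defining $F^*$ as an FIO. The Schwartz kernel of $F^*$ is the conormal distribution to the graph of $F$ in $X \times Y$, which is globally well defined without any choices; the partition of unity is only needed to verify membership in $\I^m$ chart by chart, which is the routine way such claims are checked. The discussion of partitions of unity preceding Lemma~\ref{pushPullLemma} in the paper concerns the \emph{composition} $p_* \circ \tilde a^m \circ \Phi^*$, where the intermediate function fails to be compactly supported, not the pullback and pushforward operators individually.
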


We will use the notation of \cite{hormander2009analysis} for Fourier integral operators. Given smooth manifolds $X$ and $Y$, the set $\I^m(X \times Y, C^\prime)$ consists of all operators which map $\mathcal E^\prime(Y)$ to $\mathcal D^\prime(X)$, and whose Schwartz kernel is a Fourier integral of order $m$ with Lagrangian given by the twisted canonical relation $C^\prime$. Unlike \cite{hormander2009analysis}, we will allow $C$ to be merely a local canonical relation, meaning it can be immersed rather than embedded. 

In this setup, Lemma \ref{pushPullLemma} implies that 
\begin{align*}
    \Phi^* &\in \I^{-1 / 4} \big( (SM \times \R) \times SM, C_{\Phi^*}^\prime \big), \\
    p_* &\in \I^{-1 / 4} \big( SM \times (SM \times \R), C_{p_*}^\prime \big), \\
    \pi^* &\in \I^{(1 - n) / 4}(SM \times M, C_{\pi^*}^\prime), \\
    \pi_* &\in \I^{(1 - n) / 4}(M \times SM, C_{\pi_*}^\prime).
\end{align*}
In the next subsection, we will see that the key to determining the microlocal structure of $\A$ is to show that a certain composition involving these operators is a Fourier integral operator. Our main tool will be the clean intersection calculus for Fourier integral operators, whose statement may be found in \cite{duistermaat1975spectrum}, \cite{weinstein1975maslov}, and \cite{hormander2009analysis}.


\subsection{Initial Decomposition of the L\'evy Generator}

The next result, a consequence of Theorem $1.6$ in \cite{chaubet2022geodesic}, is the starting point for our microlocal analysis of $\A$. The idea of the proof is to split $\A$ into two parts: one where $\exp_x$ is injective, and a remainder term involving $\pi_*$, $R_a$, and $\pi^*$. This decomposition is unnecessary if $(M, g)$ is the flat torus, since $-\A$ is the fractional Laplace-Beltrami operator in that case (Theorem $1.4$ in \cite{chaubet2022geodesic}). Thus we may safely rule out that case throughout the paper.

\begin{thm} \label{initialDecomp}
    If $R_a$ preserves $C^\infty(SM)$, then
    \begin{equation*} 
        \A = \A_{2\alpha} + \A_0 + \pi_* \circ R_a \circ \pi^*,
    \end{equation*}
    where $\A_{2\alpha}$ (resp.\@ $\A_0$) is a pseudodifferential operator of order $2\alpha$ (resp.\@ $0$) on $M$.
\end{thm}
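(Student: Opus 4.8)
The plan is to reduce Theorem~\ref{initialDecomp} to the corresponding statement in \cite{chaubet2022geodesic} (Theorem~$1.6$), and then to identify precisely which of the terms appearing there are classical pseudodifferential operators. First I would fix a normal coordinate description of the L\'evy generator near the diagonal: for $|v|_g$ small the map $\exp_x$ is a diffeomorphism onto a geodesic ball, so the integral defining $\A u$ in Definition~\ref{Adef} can be split, via the cutoff $\chi$ built into the function $a$, as
\begin{equation*}
    (\A u)(x) = C_{n,\alpha}\,\pv\!\int_{|v|_g < r_{\inj}} \frac{u(\exp_x v)-u(x)}{|v|_g^{n+2\alpha}}\,dT_x(v) \;+\; C_{n,\alpha}\!\int_{|v|_g \geq r_{\inj}} \frac{u(\exp_x v)-u(x)}{|v|_g^{n+2\alpha}}\,dT_x(v),
\end{equation*}
with a smooth transition region absorbed appropriately. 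The first integral is local: pulling back by $\exp_x$ and comparing the Riemannian density $dT_x$ with Lebesgue measure in normal coordinates shows it has the same principal structure as the fractional Laplacian on $\R^n$ plus lower-order corrections coming from the curvature expansion of the metric. This is exactly the content of Theorem~$1.6$ in \cite{chaubet2022geodesic}, and it is where the operators $\A_{2\alpha}$ (order $2\alpha$, with principal symbol a constant multiple of $|\xi|_g^{2\alpha}$) and $\A_0$ (order $0$, collecting the subleading terms) come from; I would simply cite that computation rather than redo it.

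Next I would handle the nonlocal part. On the region $|v|_g \geq r_{\inj}$ (more precisely, on the support of $1-\chi(|v|_g^2)$), the integrand no longer sees the singularity at $v=0$, and $\exp_x$ is no longer injective, so one cannot treat this piece as a pseudodifferential operator. Instead I would rewrite it as an average along the geodesic flow. Writing $v = s\,\omega$ with $\omega \in S_xM$ and $s = |v|_g > 0$, the density factors as $dT_x(v) = s^{n-1}\,ds\,d\sigma_x(\omega)$ where $d\sigma_x$ is the measure on $S_xM$, and since $\exp_x(s\omega) = \gamma_\omega(s) = \pi(\Phi(\omega,s))$, the nonlocal integral becomes
\begin{equation*}
    C_{n,\alpha}\int_{S_xM}\int_0^\infty \big(1-\chi(s^2)\big)\,s^{-1-2\alpha}\,u\big(\pi(\Phi(\omega,s))\big)\,ds\,d\sigma_x(\omega) \;-\; (\text{const})\cdot u(x),
\end{equation*}
where the constant term (coming from the $-u(x)$ piece) is smoothing — indeed it is a smooth multiple of $u(x)$, hence order $0$, and I would fold it into $\A_0$. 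The inner expression is, by the definitions of $\pi^*$, $R_a$ (equation~\eqref{R_a}), and $\pi_*$, exactly $\big(\pi_* \circ R_a \circ \pi^*\big)u$ evaluated at $x$: pulling $u$ back by $\pi$ makes it a function on $SM$, applying $R_a$ averages $\pi^*u$ along the flow against the kernel $C_{n,\alpha}\,a(s)$, and pushing forward by $\pi$ integrates over the fiber $S_xM$ against $d\sigma_x$. One must check that the fiber measure implicit in the pushforward $\pi_*$ (Definition~\ref{pushPullDef}, with the chosen densities on $SM$ and $M$) matches $d\sigma_x$ up to the volume normalization, which is a routine comparison of densities; the hypothesis that $R_a$ preserves $C^\infty(SM)$ is what guarantees $\pi_* \circ R_a \circ \pi^*$ is a well-defined operator $C^\infty(M) \to C^\infty(M)$ and that $\A u$ is actually smooth, so that the identity makes sense as an equality of smooth functions.

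The main obstacle, such as it is, is bookkeeping rather than conceptual: one has to track the principal-value prescription through the splitting (verifying that the nonlocal piece is a genuine, absolutely convergent integral once the cutoff is in place, so no $\pv$ is needed there) and to confirm that the curvature-correction terms from the local piece really do assemble into a single classical pseudodifferential operator of order $0$ with no hidden terms of intermediate order — but both of these are established in \cite{chaubet2022geodesic}, so the honest content of the proof here is the identification of the second integral with $\pi_* \circ R_a \circ \pi^*$ together with the density comparison. I would therefore present the argument as: (i) invoke Theorem~$1.6$ of \cite{chaubet2022geodesic} for the splitting $\A = \A_{2\alpha} + \A_0 + \mathcal{R}$, where $\mathcal{R}$ is the nonlocal remainder written as an integral against $(1-\chi(|v|_g^2))|v|_g^{-n-2\alpha}$; (ii) pass to polar coordinates in each fiber $T_xM$ and recognize the geodesic flow $\Phi$; (iii) unwind Definitions~\ref{pushPullDef} and the formula~\eqref{R_a} to conclude $\mathcal{R} = \pi_* \circ R_a \circ \pi^*$, noting that the $-u(x)$ contribution is an order-zero multiplication operator absorbed into $\A_0$.
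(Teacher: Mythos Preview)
Your proposal is correct and follows essentially the same route as the paper: split $\A$ via the cutoff $\chi$, recognize the local piece as a pseudodifferential operator of order $2\alpha$ (the paper does this by the change of variables $y=\exp_x(v)$ rather than by citing \cite{chaubet2022geodesic}), and rewrite the nonlocal piece in polar coordinates to obtain $\pi_*\circ R_a\circ\pi^*$ minus a multiplication operator. The only bookkeeping discrepancy is that in the paper $\A_{2\alpha}$ is the \emph{entire} local integral and $\A_0$ is \emph{only} the multiplication by $-(\pi_*\circ R_a\circ\pi^*1)$ coming from the $-u(x)$ term; there is no separate ``curvature-correction'' order-$0$ piece split off from the local part.
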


\begin{proof}
    Fix $u \in C^\infty(M)$ and $x \in M$. Write $\A = A_1 + A_2$, where
    \begin{align*}
        (A_1 u)(x) &= C_{n, \alpha} \pv\int_{T_x M \sm \{ 0 \}} \chi \left( |v|_g^2 \right) \frac{u(\exp_x(v)) - u(x)}{|v|_g^{n + 2\alpha}} \, dT_x(v), \\
        (A_2 u)(x) &= C_{n, \alpha} \int_{T_x M \sm \{ 0 \}} \left( 1 - \chi \left( |v|_g^2 \right) \right) \frac{u(\exp_x(v)) - u(x)}{|v|_g^{n + 2\alpha}} \, dT_x(v).  
    \end{align*}

    By our choice of $\chi$, the integrand of $A_1$ vanishes whenever $|v|_g^2 > r_{\inj}^2 / 2$, so we can make the change of variables $y = \exp_x(v)$. Then
    \begin{equation*}
        (A_1 u)(x) = C_{n, \alpha} \pv\int_M \chi(d_g(x, y)^2)\frac{u(y) - u(x)}{d_g(x, y)^{n + 2\alpha}}J(x, y) \, dV_g(y),
    \end{equation*}
    where $d_g(x, y)$ is the Riemannian distance from $x$ to $y$, $J(x, y)$ is the Jacobian determinant of the map $y \mapsto \exp_x^{-1}(y)$, and $dV_g$ is the Riemannian density of $(M, g)$. Hence $\A_{2\alpha} := A_1$ is a pseudodifferential operator of order $2\alpha$ on $M$.

    For $A_2$, use polar coordinates to write
    \begin{equation*}
        (A_2 u)(x) = C_{n, \alpha} \int_{S_x M} \int_0^\infty \left( 1 - \chi \left( s^2 \right) \right) \frac{u(\exp_x(sv)) - u(x)}{s^{1 + 2\alpha}} \, ds \, dS_x(v),
    \end{equation*}
    where $dS_x$ is the Riemannian density of $S_x M$ induced by the metric $g|_{S_x M}$. Since $R_a$ preserves $C^\infty(SM)$ and $\pi_*$ integrates over the fibers of $\pi$, we can write
    \begin{equation*}
        (A_2 u)(x) = (\pi_* \circ R_a \circ \pi^*u)(x) - (\pi_* \circ R_a \circ \pi^*1)(x)u(x).
    \end{equation*}
    Let $\A_0$ be the operator which multiplies by the constant function $-(\pi_* \circ R_a \circ \pi^*1)$. Since $\A_0$ is a pseudodifferential operator of order $0$ on $M$, this completes the proof.
\end{proof}

We will see in Theorem \ref{R_aThm} that $R_a$ is a Fourier integral operator, so it preserves $C^\infty(SM)$. Therefore the microlocal analysis of $\A$ boils down to showing that $\pi_* \circ R_a \circ \pi^*$ is a Fourier integral operator, a task which we will take up in the next section.


\section{Microlocal Structure of the L\'evy Generator} \label{mainSection}

This section contains our main result: a decomposition of $\A$ into pseudodifferential operators and Fourier integral operators. In light of Theorem \ref{initialDecomp}, a natural first step is to show that $R_a$ is a Fourier integral operator. Since we will eventually need to split $\pi_* \circ R_a \circ \pi^*$ into several pieces, we will actually introduce an arbitrary smooth function into $R_a$ and show this more general operator is a Fourier integral operator.


\subsection{$R_a$ is a Fourier Integral Operator}

Givan any $\psi \in C^\infty(SM \times \R)$, let
\begin{equation*}
    R_{a, \psi} = p_* \circ (\psi\tilde a)^m \circ \Phi^*,
\end{equation*}
where $(\psi\tilde a)^m$ is the operator which multiplies by $\psi\tilde a$. Since $(\psi\tilde a)^m$ is a properly supported pseudodifferential operator of order $0$ on $SM \times \R$ and $\Phi^*$ is in $\I^{-1 / 4} \big( (SM \times \R) \times SM, C_{\Phi^*}^\prime \big)$, the operator $(\psi\tilde a)^m \circ \Phi^*$ is also in $\I^{-1 / 4} \big( (SM \times \R) \times SM, C_{\Phi^*}^\prime \big)$. Therefore, if we can show that the clean intersection calculus applies to the composition of $p_*$ and $(\psi\tilde a)^m \circ \Phi^*$, then $R_{a, \psi}$ is a Fourier integral operator. This is the content of the following theorem. 

\begin{thm} \label{R_aThm}
    Let
    \begin{align*}
        C_{R_{a, \psi}} = \Big\{ (\xi, \tilde\xi) \in T^*SM \times T^*SM : & \ \exists \, s \in \R \ \text{such that}
        \\ & \ dp|_{(\pi_{T^*SM}(\xi), s)}^t \, \xi = d\Phi|_{(\pi_{T^*SM}(\xi), s)}^t \, \tilde\xi \Big\}.
    \end{align*}
    Then $R_{a, \psi} \in \I^{-1 / 2}(SM \times SM, C_{R_{a, \psi}}^\prime)$.
\end{thm}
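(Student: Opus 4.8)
The goal is to verify the hypotheses of the clean intersection calculus for the composition $R_{a,\psi} = p_* \circ \big((\psi\tilde a)^m \circ \Phi^*\big)$, then read off the order and canonical relation. Write $B := (\psi\tilde a)^m \circ \Phi^* \in \I^{-1/4}\big((SM\times\R)\times SM, C_{\Phi^*}'\big)$ and recall $p_* \in \I^{-1/4}\big(SM \times (SM\times\R), C_{p_*}'\big)$ from Lemma~\ref{pushPullLemma}. The composition theorem requires that $C_{p_*}$ and $C_{\Phi^*}$ intersect cleanly in $T^*SM \times T^*(SM\times\R) \times T^*(SM\times\R) \times T^*SM$ (in the sense of the fiber-product $C_{p_*} \times_{T^*(SM\times\R)} C_{\Phi^*}$), with a connected fiber of some constant excess $e$; then the composite lies in $\I^{m}(SM\times SM, C_{R_{a,\psi}}')$ with $m = -1/4 - 1/4 + e/2$, so I will need to show $e = 0$.

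First I would write out the two canonical relations explicitly using Lemma~\ref{pushPullLemma}. With $F = p$ we get $C_{p_*} = \{(\xi, dp|^t_{(v,s)}\xi) : (v,s)\in SM\times\R,\ \xi \in T^*_v SM \sm 0\}$; note $dp|^t_{(v,s)}\xi = (\xi, 0) \in T^*_v SM \times T^*_s\R$, i.e. the $\R$-covector component is forced to vanish. With $F = \Phi$ we get $C_{\Phi^*} = \{(d\Phi|^t_{(w,s)}\eta, \eta) : (w,s)\in SM\times\R,\ \eta \in T^*_{\Phi(w,s)}SM\sm 0\}$. Composability forces the middle $T^*(SM\times\R)$ factors to agree, which means $dp|^t_{(v,s)}\xi = d\Phi|^t_{(w,s)}\eta$; the vanishing of the $\R$-component of the left side then says $\partial_s\big(\eta(\dot\gamma_w(s))\big) = 0$ along the flow, and $v = w$. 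Unwinding, a point of the fiber product projects to a pair $(\xi,\tilde\xi)$ with $\tilde\xi = \eta$, $\pi_{T^*SM}(\xi) = v = \pi_{T^*SM}(w)$, $s$ such that $dp|^t_{(v,s)}\xi = d\Phi|^t_{(v,s)}\tilde\xi$ — exactly the defining relation of $C_{R_{a,\psi}}$. This identifies the composed canonical relation set-theoretically with $C_{R_{a,\psi}}$; I would also check it is an immersed (local) canonical relation of the right dimension $2n-1 + \ldots$, which the notation of the paper explicitly permits.

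The technical heart is the clean intersection condition: I must show the fiber product $C_{p_*} \,{}_{\times}\, C_{\Phi^*}$ (intersected appropriately) is a smooth manifold whose tangent space at each point equals the intersection of the tangent spaces of the two factors, with the excess $e$ constant and equal to $0$. The natural way is to parametrize: the map $(v, s, \xi) \mapsto \big((\xi, dp|^t_{(v,s)}\xi),\ (d\Phi|^t_{(v,s)}(d_v\Phi|^{-t}_{(v,s)}\xi'), \cdot)\big)$ — more cleanly, use that $\Phi(\cdot,s)$ is a diffeomorphism of $SM$ so $d_v\Phi|_{(v,s)}$ is invertible, which lets me solve for $\tilde\xi$ in terms of $\xi$ and $s$ via $\tilde\xi = (d_v\Phi|_{(v,s)})^{-t}\xi$ once the $s$-derivative constraint is imposed. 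The constraint $\partial_s\big(\tilde\xi(\dot\gamma_v(s))\big)=0$ cuts out a codimension-one condition; combined with the fact that, generically, this vanishing happens at isolated $s$, I expect the fiber over a given $(\xi,\tilde\xi)$ to be discrete, giving $e=0$ and hence $m = -1/2$. I would verify cleanness by computing the differential of the defining equations and checking the rank is constant — this is where I anticipate the real work, because one has to handle the full contact/symplectic geometry of the geodesic flow on $T^*SM$ and confirm there is no degeneracy in the transversality of the $s$-direction, likely invoking that $\partial_s \Phi(v,s) = X(\Phi(v,s))$ for the geodesic vector field $X$, which is nonvanishing on $SM$. Once cleanness with $e = 0$ is in hand, the composition theorem of \cite{duistermaat1975spectrum, weinstein1975maslov, hormander2009analysis} immediately yields $R_{a,\psi} \in \I^{-1/2}(SM\times SM, C_{R_{a,\psi}}')$.

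I would close by remarking that $\psi$ plays no role in the canonical relation or order — multiplication by the smooth function $\psi\tilde a$ only affects the symbol and possibly shrinks the support of the composed kernel, so the argument is uniform in $\psi$, which is exactly what will be needed when $\pi_* \circ R_a \circ \pi^*$ is later split into pieces. The main obstacle, to restate, is the clean-intersection verification: showing the fiber product is a smooth manifold with the tangent-space condition and, crucially, pinning down that the excess is zero rather than leaving it as an unknown constant $e$ (which would only give order $-1/2 + e/2$).
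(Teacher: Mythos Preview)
Your plan is essentially the paper's: apply the clean intersection calculus to $C_{p_*}\circ C_{\Phi^*}$, parametrize the fiber product, show the excess is zero using the nonvanishing of the geodesic vector field, and read off the order $-1/2$. One correction: the vanishing of the $\R$-component of $d\Phi|_{(v,s)}^t\,\tilde\xi$ is the condition $\tilde\xi\big(\partial_s\Phi(v,s)\big)=0$ (pairing $\tilde\xi$ with the generator of the flow), not $\partial_s\big(\tilde\xi(\dot\gamma_v(s))\big)=0$; this is precisely the codimension-one constraint whose regularity the paper checks, as you anticipate, via $\partial_s\Phi\neq 0$. The paper also explicitly verifies properness of the projection from the fiber product to $T^*SM\times T^*SM$, which your sketch omits but is routine here.
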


\begin{proof}
    As noted above, we can use a partition of unity to define the composition of $p_*$ and $(\psi\tilde a)^m \circ \Phi^*$. Then by localizing and reducing to the case of two properly supported Fourier integral operators, the proof boils down to an analysis of the canonical relations.

    Since $C_{\Phi^*}$ is the canonical relation of $(\psi\tilde a)^m \circ \Phi^*$, Lemma \ref{pushPullLemma} implies that
    \begin{equation*}
        C_{R_{a, \psi}} = C_{p_*} \circ C_{\Phi^*}.
    \end{equation*}
    Therefore, by the clean intersection calculus, it is enough to prove the following:
    \begin{enumerate}
        \item[(i)] The intersection
        \begin{equation} \label{C1}
            C := (C_{p_*} \times C_{\Phi^*}) \cap \big( T^*SM \times \Delta \big( T^*(SM \times \R) \big) \times T^*SM \big)
        \end{equation}
        is clean in the sense that $C$ is an embedded submanifold, and at every point $c \in C$ the tangent space $T_c C$ equals the intersection of the tangent spaces of the two manifolds being intersected.
        \item[(ii)] The projection map $\pi_C : C \to T^*SM \times T^*SM$ is proper.
        \item[(iii)] For every $(\xi, \tilde\xi) \in T^*SM \times T^*SM$, the fiber $\pi_C^{-1}(\xi, \tilde\xi)$ is connected.
    \end{enumerate}
    If we are willing to work with local canonical relations, then point (iii) can be omitted. Point (iii) will hold in this case, but in later results it may not.

    Consider the map
    \begin{equation*}
        G : T^*SM \times \R \to T^*(SM \times \R)
    \end{equation*}
    defined by
    \begin{equation*} 
        G(\xi, s) = dp|_{(\pi_{T^*SM}(\xi), s)}^t \, \xi.
    \end{equation*}
    If we include the base points, then
    \begin{equation} \label{G}
        G(\pi_{T^*SM}(\xi), \xi, s) = \big( (\pi_{T^*SM}(\xi), s), (\xi, 0) \big),
    \end{equation}
    so $G$ is a smooth embedding. The injectivity of $G$ implies that $\pi_C$ is injective. Hence point (iii) holds, and assuming points (i) and (ii) are true the excess of the intersection \eqref{C1} is zero.

    To begin proving point (i), let $Z$ be the smooth rank-$(2n - 1)$ subbundle of $T^*(SM \times \R)$ whose fiber over each point $(v, s) \in SM \times \R$ is
    \begin{equation*}
        Z_{(v, s)} := \Range \left( d\Phi|_{(v, s)}^t \right).
    \end{equation*}
    In other words, $Z_{(v, s)}$ is the subspace of $T_{(v, s)}^*(SM \times \R)$ which is conormal to the kernel of $d\Phi|_{(v, s)}$. To see that $Z$ is a smooth subbundle, just use the rank theorem to write
    \begin{equation*}
        \Phi \left( y^1, \dots, y^{2n} \right) = \left( y^1, \dots, y^{2n - 1} \right)
    \end{equation*}
    locally, and note that $\left( dy^1, \dots, dy^{2n - 1} \right)$ is a smooth local frame for $Z$. Because $d\Phi|_{(v, s)}^t$ is a linear isomorphism from $T_{\Phi(v, s)}^*SM$ to $Z_{(v, s)}$, we can define a smooth map
    \begin{equation} \label{dPhi^-t}
        d\Phi^{-t} : Z \to T^*SM
    \end{equation}
    whose restriction to each fiber $Z_{(v, s)}$ is the inverse $(d\Phi|_{(v, s)}^t)^{-1} : Z_{(v, s)} \to T_{\Phi(v, s)}^*SM$.

    The domain of our smooth parametrization of $C$ will be the set
    \begin{equation*}
        \OO := \Range(G) \cap Z.
    \end{equation*}
    We claim that $\OO$ is an embedded submanifold of dimension $4n - 2$. Let
    \begin{equation} \label{q}
        q : Z \to \R
    \end{equation}
    be the restriction to $Z$ of the projection map
    \begin{equation*}
        T^*(SM \times \R) \ni \big( (v, s), (\xi, \sigma) \big) \mapsto \sigma.
    \end{equation*}
    Since $\OO$ equals the level set $q^{-1}(0)$, the claim is true if $dq|_\zeta$ is nonzero for all $\zeta \in Z$. Let $\pi_V(\zeta) = (v, s)$. Then $\zeta = d\Phi|_{(v, s)}^t \, \theta$ for some $\theta \in T_{\Phi(v, s)}^*SM$. Choose slice coordinates for $SM$ near $v$ and near $\Phi(v, s)$, and fix natural coordinates for $T^*SM$ associated with the latter. Then locally we can write $\Phi = \left( \Phi^1, \dots, \Phi^{2n - 1} \right)$ and $\theta = (\theta_1, \dots, \theta_{2n - 1})$. Since $\Phi(v, \cdot)$ is a unit-speed geodesic, we may suppose without loss of generality that 
    \begin{equation*}
        \left. \frac{d}{d\tilde s} \right|_{\tilde s = s} \Phi^1(v, \tilde s) \neq 0.
    \end{equation*}
    Define a curve $\beta = \left( \beta^1, \dots, \beta^{2n} \right)$ in $T^*SM$ as the composition of $d\Phi|_{(v, s)}^t$ and the curve
    \begin{equation*}
        \R \ni \tau \mapsto (\theta_1 + \tau, \theta_2, \dots, \theta_{2n - 1}) \in T_{\Phi(v, s)}^*SM.
    \end{equation*}
    Then $\beta$ is a smooth curve in $Z_{(v, s)}$ such that $\beta(0) = \zeta$. Moreover,
    \begin{equation} \label{dqNonzero}
        \left. \frac{d}{d\tau} \right|_{\tau = 0} (q \circ \beta)(\tau) = \left. \frac{d}{d\tau} \right|_{\tau = 0} \beta^{2n}(\tau) = \left. \frac{d}{d\tilde s} \right|_{\tilde s = s} \Phi^1(v, \tilde s) \neq 0.
    \end{equation}
    Therefore, as claimed, $\OO$ is an embedded submanifold of dimension $4n - 2$.

    Now we can use $\OO$ to parametrize $C$. Indeed, if $G_\xi^{-1}$ is the inverse of \eqref{G} composed with the projection onto the $\xi$ component, then the map
    \begin{equation*}
        P_C : \OO \to T^*SM \times \OO \times \OO \times T^*SM
    \end{equation*}
    defined by
    \begin{equation*}
        P_C(\zeta) = \left( G_\xi^{-1}(\zeta), \zeta, \zeta, d\Phi^{-t}\zeta \right)
    \end{equation*}
    is a smooth embedding, so $C$ is an embedded submanifold of dimension $4n - 2$.

    To finish proving point (i), fix $c \in C$. Since $T_c C$ is necessarily contained in the intersection of the tangent spaces of the manifolds on the right-hand side of \eqref{C1}, it is enough to show the reverse containment. Observe that $C_{p_*} \times C_{\Phi^*}$ can be parametrized by the map
    \begin{equation*}
        P_{C_{p_*} \times C_{\Phi^*}} : T^*SM \times \R \times Z \to T^*SM \times \Range(G) \times Z \times T^*SM
    \end{equation*}
    defined by
    \begin{equation*}
        P_{C_{p_*} \times C_{\Phi^*}}(\xi, s, \zeta) = \left( \xi, G(\xi, s), \zeta, d\Phi^{-t}\zeta \right).
    \end{equation*}
    Hence any vector $X \in T_c(C_{p_*} \times C_{\Phi^*})$ is the velocity of some smooth curve
    \begin{equation*}
        \R \ni \tau \mapsto (\xi(\tau), s(\tau), \zeta(\tau)) \in T^*SM \times \R \times Z,
    \end{equation*}
    meaning $P_{C_{p_*} \times C_{\Phi^*}}(\xi(0), s(0), \zeta(0)) = c$ and
    \begin{equation*}
        X = \left. \frac{d}{d\tau} \right|_{\tau = 0} P_{C_{p_*} \times C_{\Phi^*}}(\xi(\tau), s(\tau), \zeta(\tau)).
    \end{equation*}
    The vector $X$ is also in $T_c \big( T^*SM \times \Delta \big( T^*(SM \times \R) \big) \times T^*SM \big)$ if and only if
    \begin{equation*}
        \left. \frac{d}{d\tau} \right|_{\tau = 0} G(\xi(\tau), s(\tau)) = \left. \frac{d}{d\tau} \right|_{\tau = 0} \zeta(\tau).
    \end{equation*}
    Thus, in any local coordinates, $G(\xi, s)$ and $\zeta$ agree to first order at $\tau = 0$. Then \eqref{G} implies that the same is true of $\xi$ and $G_\xi^{-1}(\zeta)$. Hence
    \begin{align*}
        X &= \left. \frac{d}{d\tau} \right|_{\tau = 0} P_{C_{p_*} \times C_{\Phi^*}}(\xi(\tau), s(\tau), \zeta(\tau)) \\
        &= \left. \frac{d}{d\tau} \right|_{\tau = 0} P_C(\zeta(\tau)),
    \end{align*}
    which means $X \in T_c C$. This completes the proof of point (i).

    For point (ii), suppose we have a compact set $K \s T^*SM \times T^*SM$. Then there exists a constant $\rho > 0$ such that
    \begin{equation*}
        K \s S := \left\{ (\xi, \tilde\xi) \in T^*SM \times T^*SM : |\xi|_g + |\tilde\xi|_g \leq \rho \right\}.
    \end{equation*}
    Hence $P_C^{-1}(\pi_C^{-1}(S))$ is precisely the set
    \begin{equation*}
        \left\{ \zeta \in \OO : |G_\xi^{-1}(\zeta)|_g + |d\Phi^{-t}\zeta|_g \leq \rho \right\},
    \end{equation*}
    which is compact by continuity. Then $P_C^{-1}(\pi_C^{-1}(K))$ is compact, because it is a closed subset of the compact set $P_C^{-1}(\pi_C^{-1}(S))$. Since $P_C$ is a diffeomorphism onto $C$, this implies that $\pi_C$ is a proper map. Thus point (ii) holds.

    Since points (i)-(iii) hold and the excess is zero, the map $\pi_C \circ P_C$ is a smooth embedding, so its image $C_{R_{a, \psi}}$ is an embedded submanifold. Since $p_*$ and $(\psi\tilde a)^m \circ \Phi^*$ are Fourier integral operators of order $-1 / 4$, we conclude that $R_{a, \psi} \in \I^{-1 / 2}(SM \times SM, C_{R_{a, \psi}}^\prime)$.
\end{proof}

Because $R_{a, \psi}$ is a Fourier integral operator with canonical relation $C_{R_{a, \psi}}$, its twisted wave front set must be contained in $C_{R_{a, \psi}}$. In the next lemma, we will leverage the fact that $R_{a, \psi}$ is a composition of Fourier integral operators to say a bit more.

\begin{lemma} \label{R_aWF}
    The twisted wave front set of $R_{a, \psi}$ is contained in
    \begin{align*}
        \Big\{ (\xi, \tilde\xi) \in T^*SM \times T^*SM : & \ \exists \, s \neq 0 \ \text{such that} \ (\pi_{T^*SM}(\xi), s) \in \supp(\psi) \
        \\ & \ \text{and} \ dp|_{(\pi_{T^*SM}(\xi), s)}^t \, \xi = d\Phi|_{(\pi_{T^*SM}(\xi), s)}^t \, \tilde\xi \Big\}.
    \end{align*}
\end{lemma}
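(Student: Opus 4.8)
The plan is to sharpen Theorem \ref{R_aThm}: we already know $R_{a, \psi} \in \I^{-1 / 2}(SM \times SM, C_{R_{a, \psi}}^\prime)$, so $\WF^\prime(R_{a, \psi}) \s C_{R_{a, \psi}}$, and the point is to cut this down to the portion of $C_{R_{a, \psi}}$ lying over $\supp(\psi\tilde a)$ by locating where the symbol of $R_{a, \psi}$ vanishes. Since $R_{a, \psi} = p_* \circ (\psi\tilde a)^m \circ \Phi^*$ and the intersection appearing in the proof of Theorem \ref{R_aThm} has excess zero, that symbol is computed from the symbols of the three factors with no fiber integration.

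First I would record the elementary observation that produces the condition $s \neq 0$. By the choice of $\chi$, the function $a$ vanishes identically on $(-\infty, r_{\inj} / 2)$: it is zero for $s \leq 0$ by definition, and $1 - \chi(s^2) = 0$ whenever $s^2 < r_{\inj}^2 / 4$. Hence $\tilde a$, and with it $\psi\tilde a$, vanishes on a neighborhood of $SM \times \{ 0 \}$, so
\[
    \supp(\psi\tilde a) \s \supp(\psi) \cap \big( SM \times \{ s : |s| \geq r_{\inj} / 2 \} \big);
\]
in particular every point of $\supp(\psi\tilde a)$ has the form $(v, s)$ with $s \neq 0$ and $(v, s) \in \supp(\psi)$, which is exactly the pair of constraints in the statement.

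Next, the symbol analysis. The operator $(\psi\tilde a)^m$ is a pseudodifferential operator of order $0$ with full symbol $\psi\tilde a$, so the symbol of $(\psi\tilde a)^m \circ \Phi^*$ over a point of $C_{\Phi^*}$ with base point $(v, s) \in SM \times \R$ is $(\psi\tilde a)(v, s)$ times the (nowhere vanishing) symbol of $\Phi^*$, plus lower order terms built from derivatives of $\psi\tilde a$. Composing with $p_*$ — a zero-excess clean composition, by the proof of Theorem \ref{R_aThm} — multiplies by the (nowhere vanishing) symbol of $p_*$ and again only adds terms built from derivatives of $\psi\tilde a$. Since a smooth function and all its derivatives vanish on the open complement of its support, the full symbol of $R_{a, \psi}$ vanishes on the part of $C_{R_{a, \psi}}$ lying over $SM \times \R \sm \supp(\psi\tilde a)$ in the appropriate variable. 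Therefore $\WF^\prime(R_{a, \psi})$ is contained in the part lying over $\supp(\psi\tilde a)$, which, after unwinding the description of $C_{R_{a, \psi}}$ from Theorem \ref{R_aThm}, is the set of $(\xi, \tilde\xi)$ for which there is $(v, s) \in \supp(\psi\tilde a)$ with $v = \pi_{T^*SM}(\xi)$ and $dp|_{(v, s)}^t \, \xi = d\Phi|_{(v, s)}^t \, \tilde\xi$. Combined with the support observation above, this lands inside the asserted set.

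The one subtlety to be careful about is that $C_{R_{a, \psi}}$ is only an immersed (local) canonical relation — distinct times $s$ can give the same $(\xi, \tilde\xi)$ when a geodesic returns to a configuration it has already visited — so ``the part over $\supp(\psi\tilde a)$'' must first be read on the parametrizing manifold, namely as the closed subset of $\OO$ on which the base point in $SM \times \R$ lies in $\supp(\psi\tilde a)$, and only then pushed down to $T^*SM \times T^*SM$. That pushdown is the map $\pi_C$ from the proof of Theorem \ref{R_aThm}, which was shown there to be proper; since a proper continuous map sends closed sets to closed sets, the image is a closed subset of $T^*SM \times T^*SM$, consistent with the closed form of the set asserted in the lemma. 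I expect this bookkeeping — keeping the immersion and the $s$-variable straight while matching up with the closed target set — to be the only real work; everything else follows from Theorem \ref{R_aThm} and the composition calculus for symbols.
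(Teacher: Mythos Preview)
Your argument is correct, but it takes a longer route than the paper does. The paper bypasses symbols entirely and uses only the wavefront calculus for compositions: since $R_{a,\psi} = p_* \circ (\psi\tilde a)^m \circ \Phi^*$, one has
\[
    \WF^\prime(R_{a,\psi}) \s C_{p_*} \circ \WF^\prime\big((\psi\tilde a)^m\big) \circ C_{\Phi^*},
\]
and then observes that the Schwartz kernel of the multiplication operator $(\psi\tilde a)^m$ is smooth wherever $\psi\tilde a$ vanishes, so $\WF^\prime\big((\psi\tilde a)^m\big)$ sits in the diagonal over $\supp(\psi)\cap\{s\neq 0\}$; composing with $C_{p_*}$ and $C_{\Phi^*}$ gives the asserted set in one line. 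Your approach instead feeds the support information through the clean-composition symbol calculus, arguing that the full asymptotic symbol of $R_{a,\psi}$ vanishes over the complement of $\supp(\psi\tilde a)$, and then carefully tracks the immersed parametrization and the properness of $\pi_C$ to push this down to $T^*SM\times T^*SM$. That all works, but the extra machinery (symbol formulas for zero-excess clean composition, the immersion bookkeeping, the closedness argument) is unnecessary once you realize the wavefront bound for the middle factor already carries the constraints $s\neq 0$ and $(\pi_{T^*SM}(\xi),s)\in\supp(\psi)$ directly. The paper's route is shorter and avoids any appeal to the internal structure of Theorem~\ref{R_aThm}; yours has the minor advantage of making explicit why the resulting set is closed, but that is not needed for the lemma as stated.
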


\begin{proof}
    Since $R_{a, \psi}$ equals $p_* \circ (\psi\tilde a)^m \circ \Phi^*$, we know
    \begin{equation*}
        \WF^\prime(R_{a, \psi}) \s C_{p_*} \circ \WF^\prime((\psi\tilde a)^m) \circ C_{\Phi^*}.
    \end{equation*}
    Because the Schwartz kernel of $(\psi\tilde a)^m$ is smooth away from $\supp(\psi\tilde a \otimes \psi\tilde a)$, and $a \in C^\infty(\R)$ is supported away from $0$, we also know
    \begin{align*}
        \WF^\prime((\psi\tilde a)^m) \s \Big\{ \big( \hat\xi, \sigma, \hat\xi, \sigma \big) \in \Delta \big( T^*(SM \times \R) \big) : & \ \pi_{T^*\R}(\sigma) \neq 0 \ \text{and} \\ 
        & \ \left( \pi_{T^*SM}(\hat\xi), \pi_{T^*\R}(\sigma) \right) \in \supp(\psi) \Big\}.
    \end{align*}
    Putting these two containments together yields the result.
\end{proof}

This simple observation will be useful when we split $\pi_* \circ R_a \circ \pi^*$ into several pieces.


\subsection{Composition with $\pi^*$}

Recall that our goal is to understand the microlocal structure of $\pi_* \circ R_a \circ \pi^*$. Theorem \ref{R_aThm} demonstrated that $R_{a, \psi}$ is a Fourier integral operator for any $\psi \in C^\infty(SM \times \R)$. The next theorem shows the same is true of the operator
\begin{equation*}
    L_\psi := R_{a, \psi} \circ \pi^*.
\end{equation*}

\begin{thm} \label{compWithPullback}
    Let
    \begin{align*} 
        C_{L_\psi} = \big\{ (\xi, \tilde\eta) \in T^*SM & \times T^*M : \exists \, s \in \R \ \text{such that} \\
        & \ dp|_{(\pi_{T^*SM}(\xi), s)}^t \, \xi = d\Phi|_{(\pi_{T^*SM}(\xi), s)}^t \circ d\pi|_{\Phi(\pi_{T^*SM}(\xi), s)}^t \, \tilde\eta \big\}.
    \end{align*}        
    Then $L_\psi \in \I^{-(n + 1) / 4}(SM \times M, C_{L_\psi}^\prime)$.
\end{thm}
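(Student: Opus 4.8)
The plan is to realize $L_\psi$ as a composition of the same shape as the one handled in Theorem \ref{R_aThm}, with the geodesic flow on $SM$ replaced by a single submersion onto $M$. Since pullback reverses composition, $L_\psi = R_{a,\psi} \circ \pi^* = p_* \circ (\psi\tilde a)^m \circ \Phi^* \circ \pi^* = p_* \circ (\psi\tilde a)^m \circ \Psi^*$, where $\Psi := \pi \circ \Phi : SM \times \R \to M$ sends $(v,s)$ to $\gamma_v(s)$. Because each $\Phi(\cdot, s)$ is a diffeomorphism of $SM$ and $\pi$ is a submersion, $\Psi(\cdot, s) = \pi \circ \Phi(\cdot, s)$ is a submersion for every $s$, so $\Psi$ is a smooth submersion. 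Lemma \ref{pushPullLemma} then gives $\Psi^* \in \I^{-n/4}\big((SM \times \R) \times M, C_{\Psi^*}^\prime\big)$, and precomposing with the properly supported order-$0$ operator $(\psi\tilde a)^m$ does not change this, so $L_\psi$ is the composition of $p_* \in \I^{-1/4}\big(SM \times (SM \times \R), C_{p_*}^\prime\big)$ with an operator in $\I^{-n/4}\big((SM \times \R) \times M, C_{\Psi^*}^\prime\big)$. This is precisely the situation of the proof of Theorem \ref{R_aThm}, the only change being that the geodesic flow $\Phi : SM \times \R \to SM$ there is replaced by the submersion $\Psi : SM \times \R \to M$ here.

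I would then run the clean intersection calculus following that proof line by line, with $\Phi$ replaced by $\Psi$. The chain rule $d\Psi|_{(v,s)}^t = d\Phi|_{(v,s)}^t \circ d\pi|_{\Phi(v,s)}^t$ together with Lemma \ref{pushPullLemma} identifies the composed canonical relation $C_{p_*} \circ C_{\Psi^*}$ with the set $C_{L_\psi}$ in the statement. To apply the calculus one checks that $C := (C_{p_*} \times C_{\Psi^*}) \cap \big( T^*SM \times \Delta(T^*(SM \times \R)) \times T^*M \big)$ is a clean intersection and that the projection $\pi_C$ is proper; as in Theorem \ref{R_aThm}, connectedness of the fibers of $\pi_C$ can be omitted since we allow local canonical relations. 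One reuses the embedding $G$ (it involves only $p$), replaces $Z$ by the rank-$n$ subbundle $Z^\prime \subset T^*(SM \times \R)$ with fiber $\Range\big(d\Psi|_{(v,s)}^t\big)$ over $(v,s)$ --- smooth because $\Psi$ is a submersion --- and sets $\OO^\prime := \Range(G) \cap Z^\prime$. Since $\Psi(v, \cdot)$ is a unit-speed geodesic, one of its coordinate components has nonzero $s$-derivative, so, exactly as in the computation of $dq$ in Theorem \ref{R_aThm}, the restriction to $Z^\prime$ of the ``$ds$-component'' of a covector has nonvanishing differential; hence $\OO^\prime$ is an embedded submanifold of dimension $3n - 1$. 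It parametrizes $C$ through $\zeta \mapsto \big( G_\xi^{-1}(\zeta), \zeta, \zeta, d\Psi^{-t}\zeta \big)$, and the tangent-space identity and the properness of $\pi_C$ follow from the same curve arguments. The excess is zero: if the $T^*SM$-component $G_\xi^{-1}(\zeta)$ is held stationary then $\zeta$ can only move in the $s$-direction, and then $d\Psi^{-t}\zeta \in T_{\gamma_v(s)}^*M$ cannot be stationary because its base point $\gamma_v(s)$ moves with speed $|\dot\gamma_v(s)|_g = 1$, so $\pi_C \circ P_C$ is an immersion. The calculus then gives $L_\psi \in \I^{-1/4 - n/4}\big(SM \times M, C_{L_\psi}^\prime\big) = \I^{-(n+1)/4}\big(SM \times M, C_{L_\psi}^\prime\big)$.

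The main obstacle is the cleanness verification, and the reason it goes through here --- unlike the more delicate compositions in the rest of the section --- is that $\Psi$ has no conjugate-point degeneracy: it is a genuine submersion precisely because $\Psi(\cdot, s)$ factors through the diffeomorphism $\Phi(\cdot, s)$. This is exactly what guarantees that $Z^\prime$ is a smooth vector bundle of rank $n$ and that the ``$ds$-component'' restricts to a submersion on it, which is what makes it possible to build the parametrization $\OO^\prime \to C$ and to read off that the excess vanishes.
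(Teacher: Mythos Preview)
Your approach is correct and takes a genuinely different route from the paper's. The paper treats $L_\psi$ as the composition of the already-established Fourier integral operator $R_{a,\psi}$ with $\pi^*$ and runs a second clean intersection argument, introducing new auxiliary objects: the subbundle $V=\Range(d\pi^t)\subset T^*SM$, its preimage $\widetilde Z=(d\Phi^{-t})^{-1}(V)$, and the parametrizing manifold $\widetilde\OO=\OO\cap\widetilde Z$. You instead collapse $\Phi^*\circ\pi^*$ into the pullback by the single submersion $\Psi=\pi\circ\Phi$, which makes the composition $p_*\circ(\psi\tilde a)^m\circ\Psi^*$ formally identical to the one in Theorem~\ref{R_aThm}; the entire cleanness and properness verification then carries over with $\Phi$ replaced by $\Psi$ and the rank-$(2n-1)$ bundle $Z$ replaced by the rank-$n$ bundle $Z'$. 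This is more economical and avoids the second layer of bookkeeping. The paper's route does have the side benefit that the objects $V$ and $d\pi^{-t}$ reappear later in Lemma~\ref{geometricLemma} and Theorem~\ref{mainThm}, but they can just as well be introduced there.

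One small point: in Theorem~\ref{R_aThm} the excess was seen to vanish because $G$ is injective, hence $\pi_C$ is injective. In your setting that shortcut is not available verbatim, and you supply a separate immersion argument for $\pi_C\circ P_C$. That argument is fine (a tangent vector in the kernel has $\delta\xi=0$, hence varies only in $s$, and then the base point $\gamma_v(s)$ of $d\Psi^{-t}\zeta$ moves, forcing $\delta s=0$); alternatively, the straight dimension count --- $\dim\OO'=3n-1$ equals the transversal intersection dimension $(4n-1)+3n+(10n-2)-(14n-2)$ --- gives excess zero without further work.
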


\begin{proof}
    Using Theorem \ref{R_aThm} and Lemma \ref{pushPullLemma}, one can check that $C_{L_\psi}$ equals $C_{R_{a, \psi}} \circ C_{\pi^*}$, so it suffices to show points (i)-(ii) in the proof of Theorem \ref{R_aThm} hold with \eqref{C1} replaced by
    \begin{equation} \label{C2}
        C := (C_{R_{a, \psi}} \times C_{\pi^*}) \cap (T^*SM \times \Delta(T^*SM) \times T^*M).
    \end{equation}
    (Since we omit point (iii), in general $C_{L_\psi}$ will only be a local canonical relation.)

    To begin proving point (i), let $V$ be the smooth rank-$n$ subbundle of $T^*SM$ whose fiber over each vector $v \in SM$ is
    \begin{equation*}
        V_v := \Range \left( d\pi|_v^t \right).
    \end{equation*}
    Similar to \eqref{dPhi^-t}, we can define a smooth map
    \begin{equation*}
        d\pi^{-t} : V \to T^*M
    \end{equation*}
    whose restriction to each fiber $V_v$ is the inverse $(d\pi|_v^t)^{-1} : V_v \to T_{\pi(v)}^*M$. Let
    \begin{equation*}
        \widetilde Z = \left( d\Phi^{-t} \right)^{-1}(V).
    \end{equation*}    
    Since $d\Phi^{-t} : Z \to T^*SM$ is a smooth submersion and $V$ is an embedded codimension-$(n - 1)$ submanifold of $T^*SM$, we know $\widetilde Z$ is an embedded codimension-$(n - 1)$ submanifold of $Z$.

    Using a similar argument as the proof of Theorem \ref{R_aThm}, we will show that
    \begin{equation*}
        \widetilde \OO := \OO \cap \widetilde Z
    \end{equation*}
    is an embedded submanifold of dimension $3n - 1$. Let $\tilde q : \widetilde Z \to \R$ be the restriction to $\widetilde Z$ of the function $q$ defined in \eqref{q}. Then $\widetilde\OO$ equals the level set $\tilde q^{-1}(0)$, so it suffices to show that $d\tilde q|_{\tilde\zeta}$ is nonzero for all $\tilde\zeta \in \widetilde Z$. Let $\pi_V(\tilde\zeta) = (v, s)$. Then for some $\tilde\eta \in T_{\gamma_v(s)}^*M$, we have 
    \begin{equation*}
        \tilde\zeta = d\Phi|_{(v, s)}^t \circ d\pi|_{\Phi(v, s)}^t \, \tilde\eta.
    \end{equation*}
    Choose slice coordinates for $SM$ near $\Phi(v, s)$, which yield corresponding coordinates for $M$ near $\gamma_v(s)$. Fix natural coordinates on $T^*M$. Then locally we can write $\tilde\eta = (\tilde\eta_1, \dots, \tilde\eta_n)$. Define a curve $\beta$ in $T^*SM$ as the composition of $d\Phi|_{(v, s)}^t \circ d\pi|_{\Phi(v, s)}^t$ and the curve
    \begin{equation*}
        \R \ni \tau \mapsto (\tilde\eta_1 + \tau, \tilde\eta_2, \dots, \tilde\eta_n) \in T_{\gamma_v(s)}^*M.
    \end{equation*}
    Then $\beta$ is a smooth curve in $\widetilde Z$ such that $\beta(0) = \tilde\zeta$. Moreover, similar to \eqref{dqNonzero}, 
    \begin{equation*}
        \left. \frac{d}{d\tau} \right|_{\tau = 0} (\tilde q \circ \beta)(\tau) \neq 0.
    \end{equation*}
    This proves that $\widetilde\OO$ is an embedded submanifold of dimension $3n - 1$.
    
    Now we can use $\widetilde\OO$ to parametrize $C$ via the map
    \begin{equation*}
        P_C : \widetilde\OO \to T^*SM \times T^*SM \times T^*SM \times T^*M
    \end{equation*}
    defined by
    \begin{equation*}
        P_C(\tilde\zeta) = \left( G_\xi^{-1}(\tilde\zeta), d\Phi^{-t}\tilde\zeta, d\Phi^{-t}\tilde\zeta, d\pi^{-t} \circ d\Phi^{-t}\tilde\zeta \right).
    \end{equation*}
    By the last paragraph of the proof of Theorem \ref{R_aThm}, the map
    \begin{equation*}
        \OO \ni \zeta \mapsto \left( G_\xi^{-1}(\zeta), d\Phi^{-t}\zeta \right) \in T^*SM \times T^*SM
    \end{equation*}
    is a smooth embedding, and hence so is its restriction to $\widetilde\OO$. Thus $P_C$ is a smooth embedding, so $C$ is an embedded submanifold of dimension $3n - 1$. 

    To complete the proof of (i), note that $C_{R_{a, \psi}} \times C_{\pi^*}$ is parametrized by the map
    \begin{equation*}
        P_{C_{R_{a, \psi}} \times C_{\pi^*}} : \OO \times V \to T^*SM \times T^*SM \times V \times T^*M
    \end{equation*}
    given by
    \begin{equation*}
        P_{C_{R_{a, \psi}} \times C_{\pi^*}}(\zeta, \theta) = \left( G_\xi^{-1}(\zeta), d\Phi^{-t}\zeta, \theta, d\pi^{-t}\theta \right).
    \end{equation*}
    Fix $c \in C$ and suppose $X \in T_c(C_{R_{a, \psi}} \times C_{\pi^*})$. Then there is a smooth curve
    \begin{equation*}
        \R \ni \tau \mapsto (\zeta(\tau), \theta(\tau)) \in \OO \times V
    \end{equation*}
    such that $P_{C_{R_{a, \psi}} \times C_{\pi^*}}(\zeta(0), \theta(0)) = c$ and the velocity of this curve at zero is $X$. As in the proof of Theorem \ref{R_aThm}, if $X \in T_c(T^*SM \times \Delta(T^*SM) \times T^*M)$ as well, then the derivatives of $d\Phi^{-t}\zeta$ and $\theta$ agree at $\tau = 0$ in any local coordinates. It follows that
    \begin{align*}
        X &= \left. \frac{d}{d\tau} \right|_{\tau = 0} P_{C_{R_{a, \psi}} \times C_{\pi^*}}(\zeta(\tau), \theta(\tau)) \\
        &= \left. \frac{d}{d\tau} \right|_{\tau = 0} P_C(\zeta(\tau)),
    \end{align*}
    which means $X \in T_c C$. Therefore the intersection \eqref{C2} is clean and the excess is zero.

    The proof of point (ii) is the same as Theorem \ref{R_aThm}, so we omit the details. Thus $C_{L_\psi}$ is a local canonical relation. Since $R_{a, \psi}$ and $\pi^*$ are Fourier integral operators of order $-1 / 2$ and $(1 - n) / 4$, respectively, we conclude that $L_\psi \in \I^{-(n + 1) / 4}(SM \times M, C_{L_\psi}^\prime)$.
\end{proof}

When $\psi = 1$, we will write $L$ and $C_L$ instead of $L_1$ and $C_{L_1}$. Then by Theorem \ref{initialDecomp}, 
\begin{equation*}
    \A = \A_{2\alpha} + \A_0 + \pi_* \circ L.
\end{equation*}
Hence the microlocal analysis of $\A$ reduces to understanding the composition $\pi_* \circ L$.

The main difficulty in this case is that $C_{\pi_*} \circ C_L$ may have multiple connected components. One component corresponds to a smoothing operator, and the others appear only when there are conjugate points. If we rule out certain types of conjugate points, then these additional components give rise to Fourier integral operators whose canonical relations and orders can be determined. This is the content of Theorem \ref{mainThm}, our main result. In the next subsection, we will provide the additional definitions and lemmas needed to state and prove it. 


\subsection{Conjugate Pairs}

Though conjugate points along a geodesic are often defined in terms of vanishing Jacobi fields, it will be more convenient to work with the corresponding velocity vectors of the geodesic instead. This leads us to the notion of a conjugate pair.

\begin{defn} \label{conjugateDef}
    We call $(v, s) \in SM \times (\R \sm \{ 0 \})$ a \textbf{conjugate pair} if
    \begin{equation*}
        K_{(v, s)} := \ker \left( d\pi|_{\Phi(v, s)} \circ d_v\Phi|_{(v, s)} \right) \cap \ker(d\pi|_v) \neq \{ 0 \}.
    \end{equation*}
    If the dimension of $K_{(v, s)}$ is $1 \leq k \leq n - 1$, then $(v, s)$ is a \textbf{conjugate pair of order} $\boldsymbol k$. The \textbf{set of regular conjugate pairs of order} $\boldsymbol k$, denoted by $\C_{R, k}$, is the set of conjugate pairs which have a neighborhood $U$ in $SM \times \R$ such that all other conjugate pairs in $U$ have order $k$. The \textbf{set of singular conjugate pairs}, denoted by $\C_S$, is the set of conjugate pairs which are not in $\C_{R, k}$ for any $k$.
\end{defn}

By Lemma $3$ in \cite{holman2018microlocal}, Definition \ref{conjugateDef} is equivalent to the traditional definition of conjugate points in terms of vanishing Jacobi fields along a geodesic.

The crucial assumption in Theorem \ref{mainThm} is that there are no singular conjugate pairs. This matters because, as the next lemma shows, the set of regular conjugate pairs of order $k$ is a smooth manifold, which may not be true of the set of all conjugate pairs.

\begin{prop} \label{CRkManifold}
    For each integer $1 \leq k \leq n - 1$, the set $\C_{R, k}$ is an embedded $(2n - 1)$-dimensional submanifold of $SM \times \R$, and the set
    \begin{equation*}
        E_{R, k} := \left\{ \big( (v, s), X \big) \in \C_{R, k} \times TSM : X \in K_{(v, s)} \right\}
    \end{equation*}
    is a smooth vector bundle of rank $k$ over $\C_{R, k}$.
\end{prop}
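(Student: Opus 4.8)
The plan is to realize $\C_{R,k}$ locally as the graph of a smooth function by reducing the conjugacy condition to the vanishing of a symmetric matrix that is monotone in the flow parameter. First I would package Definition \ref{conjugateDef} into a bundle morphism: over $SM\times\R$, let $\mathcal E$ be the pullback of the vertical bundle $\ker(d\pi)\s TSM$ along $(v,s)\mapsto v$ (rank $n-1$) and $\mathcal F$ the pullback of $TM$ along $(v,s)\mapsto\gamma_v(s)$ (rank $n$), and define the smooth morphism $\mathcal A\colon\mathcal E\to\mathcal F$ fiberwise by $\mathcal A_{(v,s)}=\big(d\pi|_{\Phi(v,s)}\circ d_v\Phi|_{(v,s)}\big)\big|_{\ker(d\pi|_v)}$. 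Then $\ker\mathcal A_{(v,s)}=K_{(v,s)}$, so $(v,s)$ is a conjugate pair of order $k$ precisely when $\mathcal A_{(v,s)}$ has rank $n-1-k$. Via Lemma $3$ of \cite{holman2018microlocal} this is the same as saying $\mathcal V_0\cap\mathcal V_s$ has dimension $k$, where $\mathcal V_t$ is the Lagrangian subspace of the $2(n-1)$-dimensional symplectic space of perpendicular Jacobi fields along $\gamma_v$ consisting of those vanishing at time $t$.

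Fix $p_0=(v_0,s_0)\in\C_{R,k}$ together with a neighborhood $U$ of $p_0$ in which every conjugate pair has order $k$; membership in $\C_{R,k}$ should be understood to entail $\dim K_{p_0}=k$ as well (that $\dim K_{p_0}\ge k$ is immediate from upper semicontinuity of $\dim K_{(\cdot)}$, using that conjugate pairs accumulate at $p_0$ by continuity of conjugate times). Choose a Lagrangian $\Lambda_1$ transverse both to $\Lambda_0:=\mathcal V_0$ and to $\mathcal V_{s_0}$, which exists for any two Lagrangians. Since transversality to $\Lambda_1$ is open, for $(v,s)$ near $p_0$ the plane $\mathcal V_s$ of the flow along $\gamma_v$ is the graph over $\Lambda_0$ of a symmetric bilinear form $Q(v,s)$ on $\R^{n-1}$, smooth in $(v,s)$, with $\ker Q(v,s)\cong K_{(v,s)}$. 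Reordering a basis so that an invertible $(n-1-k)\times(n-1-k)$ principal block $Q_{22}(p_0)$ is singled out and shrinking $U$ so $Q_{22}$ stays invertible, I pass to the Schur complement $\Sigma(v,s):=Q_{11}-Q_{12}Q_{22}^{-1}Q_{21}$, a smooth $k\times k$ \emph{symmetric} matrix with the same nullity as $Q(v,s)$. Then $\C_{R,k}\cap U=\{\Sigma=0\}$, and—crucially, because every conjugate pair in $U$ has order exactly $k$—for every $(v,s)\in U$ the matrix $\Sigma(v,s)$ is either invertible or zero.

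The monotonicity step comes next. The classical fact that the crossing form of the geodesic flow at a conjugate time is positive definite translates to $\partial_s\Sigma(p_0)$ being positive definite; shrinking $U$ we may assume $\partial_s\Sigma$ stays positive definite, so $s\mapsto\Sigma(v,s)$ is strictly Loewner-increasing for each fixed $v$. Fix $\varepsilon>0$ with $\Sigma(v_0,s_0-\varepsilon)$ negative, resp.\ $\Sigma(v_0,s_0+\varepsilon)$ positive, definite; for $v$ near $v_0$ these remain negative, resp.\ positive definite, and since a continuous path of \emph{invertible} symmetric matrices cannot change signature, the invertible-or-zero rigidity forces some $s$ with $\Sigma(v,s)=0$, unique by strict monotonicity—call it $\sigma(v)$—and a compactness argument gives $\sigma(v)\to s_0$ as $v\to v_0$. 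Smoothness of $\sigma$ follows by applying the implicit function theorem to $(v,s)\mapsto\langle\Sigma(v,s)u_0,u_0\rangle$ for a fixed $u_0\neq0$, whose $s$-derivative at $p_0$ is positive, and matching its unique local zero with $\sigma(v)$. Hence $\C_{R,k}\cap U$ is the graph of the smooth $\sigma$ over an open subset of $SM$, an embedded $(2n-1)$-dimensional submanifold; running this over all of $\C_{R,k}$ gives the first claim. For the second: on $\C_{R,k}$ every point has order $k$, so $\mathcal A|_{\C_{R,k}}$ has constant rank $n-1-k$, whence $\ker(\mathcal A|_{\C_{R,k}})$ is a smooth subbundle of $\mathcal E|_{\C_{R,k}}$ of rank $k$, and under the inclusion of $\mathcal E$ into the pullback of $TSM$ this subbundle is exactly $E_{R,k}$.

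The step I expect to be the main obstacle is the collapse of $\{\Sigma=0\}$ to a single smooth sheet: for $k\ge2$, symmetry of $\Sigma$ alone would make a generic zero locus have codimension $k(k+1)/2>1$, and definiteness of $\partial_s\Sigma$ alone does not prevent the conjugate locus from being several sheets crossing at $p_0$. It is precisely the hypothesis $p_0\in\C_{R,k}$—that $\Sigma$ never attains an intermediate rank near $p_0$—that rules this out, and making this interplay rigorous (including pinning down that points of $\C_{R,k}$ have order exactly $k$, and that the signature argument produces $\sigma$ on a full neighborhood of $v_0$ in $SM$ rather than just at $v_0$) is the crux. Everything else—the bundle morphism, the symplectic reduction to $Q$, the Schur complement, and the constant-rank argument for $E_{R,k}$—is routine.
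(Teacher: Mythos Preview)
Your approach is correct and genuinely different from the paper's. The paper does not build the Lagrangian/crossing-form machinery at all: it invokes Warner's 1965 theorem directly, obtaining that the $(k-1)$st elementary symmetric polynomial in the eigenvalues of the fiber differential $d_F\exp$ has nonvanishing radial derivative, so its zero set is a $(2n-1)$-dimensional hypersurface in $TM\sm\{0\}$; it then transports this to $SM\times\R$ via the diffeomorphism $w\mapsto(w/|w|_g,|w|_g)$. Your route---graphing the moving Lagrangian $\mathcal V_s$ over $\mathcal V_0$, passing to a Schur complement $\Sigma$, and using the definiteness of the crossing form together with the ``invertible-or-zero'' rigidity forced by the regularity hypothesis to produce a smooth graph $s=\sigma(v)$---reproves Warner's monotonicity statement in symplectic language rather than citing it. What your approach buys is self-containment and an explicit graph description of $\C_{R,k}$; it also makes transparent \emph{why} the regularity assumption is needed (it is exactly what collapses $\{\Sigma=0\}$ to a single sheet rather than a stratified set of codimension $k(k+1)/2$), whereas in the paper this is hidden inside the Warner citation. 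The paper's approach is considerably shorter. For the bundle $E_{R,k}$ the two arguments coincide: both observe that $\mathcal A|_{\C_{R,k}}$ has constant rank $n-1-k$ and take its kernel.

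Two small points to tighten. First, the ambient symplectic space of perpendicular Jacobi fields depends on $v$, so to get $Q(v,s)$ smooth in $v$ you should fix a smooth local trivialization (e.g.\ via the horizontal/vertical splitting of $T(TM)$ or parallel transport); you use this implicitly. Second, your parenthetical only argues $\dim K_{p_0}\ge k$; the reverse inequality is what makes $\Sigma$ exactly $k\times k$ and hence makes the invertible-or-zero dichotomy available. Under the paper's definition as literally written (``all \emph{other} conjugate pairs in $U$ have order $k$'') the equality $\dim K_{p_0}=k$ is not purely formal, but the intended reading---and the one the paper's own proof uses when it applies Warner at order $k$---is that points of $\C_{R,k}$ themselves have order $k$. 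You correctly flag this as the crux, and your crossing-form/Morse-index machinery is exactly what one would use to justify it if needed.
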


\begin{proof}
    For the first point, it is enough to show that each point in $\C_{R, k}$ has a neighborhood $U$ in $SM \times \R$ such that $\C_{R, k} \cap U$ is an embedded submanifold of dimension $2n - 1$. To prove this local statement, we will extend the methods of \cite{warner1965conjugate}.
    
    Fix $(v, s) \in \C_{R, k}$. Let $d_F\exp$ be the differential in the fiber variables of the exponential map $\exp : TM \to M$. By \cite{warner1965conjugate}, we can find coordinate neighborhoods $W_1$ of $sv$ in $TM$ and $W_2$ of $\exp(sv)$ in $M$ such that the $(k - 1)$st elementary symmetric polynomial in the eigenvalues of $d_F\exp$ (denoted by $\sigma_{k - 1}$) has nonzero derivative in the radial direction. Then $\sigma_{k - 1}^{-1}(0)$ is an embedded $(2n - 1)$-dimensional submanifold of $TM \sm \{ 0 \}$, and it equals the set of vectors in $W_1$ with conjugate points of order $k$ or higher in $W_2$. 
    
    Choose a neighborhood $U$ of $(v, s)$ in $SM \times \R$ such that all other conjugate pairs in $U$ have order $k$. Supposing without loss of generality that $s > 0$, we may assume $U \s SM \times (0, \infty)$. Consider the smooth map $f : TM \sm \{ 0 \} \to SM \times \R$ defined by
    \begin{equation*}
        f(w) = \left( \frac{w}{|w|_g}, |w|_g \right).
    \end{equation*}
    Then $f$ is a smooth immersion and satisfies
    \begin{equation*}
        f \left( \sigma_{k - 1}^{-1}(0) \cap f^{-1}(U) \right) = \C_{R, k} \cap U.
    \end{equation*}
    Since $f|_{\sigma_{k - 1}^{-1}(0) \cap f^{-1}(U)}$ has a continuous inverse defined on its image by
    \begin{equation*}
        \C_{R, k} \cap U \ni (\tilde v, \tilde s) \mapsto \tilde s\tilde v \in TM \sm \{ 0 \},
    \end{equation*}
    it follows that $\C_{R, k} \cap U$ is an embedded submanifold of dimension $2n - 1$.

    To prove the second point, let $\V_{R, k}$ be the pullback of $\ker(d\pi)$ by the map
    \begin{equation*}
        \C_{R, k} \ni (v, s) \mapsto v \in SM.
    \end{equation*}
    Then $E_{R, k}$ is the kernel of the smooth bundle homomorphism
    \begin{equation*}
        \V_{R, k} \ni \big( (v, s), X \big) \mapsto \left( \Phi(v, s), \, d\pi|_{\Phi(v, s)} \circ d_v\Phi|_{(v, s)} X \right) \in TSM.
    \end{equation*}
    This map has constant rank $n - 1 - k$, and we can view it as a bundle homomorphism over $\C_{R, k}$ by pulling back $TSM$ by $\Phi$. Hence $E_{R, k}$ is a smooth rank-$k$ subbundle of $\V_{R, k}$.
\end{proof}

Next we will turn $TM$ into a symplectic manifold and make some remarks. Let $\omega$ be the canonical symplectic form on $T^*M$, and let $\flat_g : TM \to T^*M$ be the musical isomorphism induced by the metric $g$. Then we can define a symplectic form $\omega_g$ on $TM$ by
\begin{equation*}
    \omega_g(X, Y) = \omega(d\flat_g X, d\flat_g Y), \quad X, Y \in T(TM).
\end{equation*}
Then $\tilde\Phi(\cdot, s)$ is a symplectomorphism for each $s \in \R$, the kernel of $d\pi_{TM}|_v$ is a Lagrangian subspace of $T_v(TM)$ for each $v \in TM$, and in natural coordinates $\left( x^i, v^i \right)$ on $TM$ we have
\begin{equation} \label{omega_g_local}
    \omega_g = \xi^\ell \frac{\partial g_{i\ell}}{\partial x^j} \, dx^j \wedge dx^i + g_{ij} \, dv^j \wedge dx^i.
\end{equation}
In turn, $\omega_g$ induces a smooth bundle isomorphism $\flat_{\omega_g} : T(TM) \to T^*(TM)$ defined by
\begin{equation*}
    [\flat_{\omega_g}(X)](Y) = \omega_g(X, Y) := (X \into \omega_g)(Y),
\end{equation*}
where $X \into \omega_g$ is interior multiplication by $X$. We will denote the inverse of $\flat_{\omega_g}$ by $\sharp_{\omega_g}$.

The next lemma defines a smooth bundle homomorphism that will help us describe the canonical relations of the various pieces of $\pi_* \circ L$.

\begin{lemma} 
    For each integer $1 \leq k \leq n - 1$, there is a smooth bundle homomorphism
    \begin{equation*}
        F_k : E_{R, k} \to T^*(M \times M) = T^*M \times T^*M
    \end{equation*}
    defined by the requirement that
    \begin{equation} \label{F_k}
        \left( d\iota_{SM}|_v X \into \omega_g, \left( d\iota_{SM}|_{\Phi(v, s)} \circ d_v\Phi|_{(v, s)} X \right) \into \omega_g \right) = d\pi_{T(M \times M)}|_{(v, \Phi(v, s))}^t \, F_k \big( (v, s), X \big).
    \end{equation}
\end{lemma}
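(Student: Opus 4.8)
The plan is to read \eqref{F_k} as a \emph{definition}: given $\big((v,s),X\big)\in E_{R,k}$, let $F_k\big((v,s),X\big)$ be the unique element of $T^*_{(\pi(v),\pi(\Phi(v,s)))}(M\times M)$ whose image under $d\pi_{T(M\times M)}|_{(v,\Phi(v,s))}^t$ is the left-hand side of \eqref{F_k}. To make this legitimate I must check three things: (a) the left-hand side actually lies in the range of that transpose map, so such an element exists; (b) it is unique; and (c) the resulting assignment is smooth and fiberwise linear, hence a bona fide smooth bundle homomorphism.

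Points (b) and (c)'s easy half are immediate. Since $\pi_{T(M\times M)} : TM\times TM\to M\times M$ is a smooth submersion, each differential $d\pi_{T(M\times M)}|_{(v,\Phi(v,s))}$ is surjective, so its transpose is injective; this gives uniqueness and makes $F_k\big((v,s),\cdot\big)$ linear. The crux is (a). For a surjective linear map $A$ one has $\Range(A^t)=(\ker A)^{\perp}$, and under the identification $T(M\times M)=TM\times TM$ the kernel of $d\pi_{T(M\times M)}|_{(v,\Phi(v,s))}$ splits as $\ker\big(d\pi_{TM}|_v\big)\times\ker\big(d\pi_{TM}|_{\Phi(v,s)}\big)$, the product of the two vertical subspaces. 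So it suffices to show that the first slot of the left-hand side of \eqref{F_k} annihilates $\ker\big(d\pi_{TM}|_v\big)$ and the second slot annihilates $\ker\big(d\pi_{TM}|_{\Phi(v,s)}\big)$. For the first slot: because $\pi_{TM}\circ\iota_{SM}=\pi$ we have $d\pi_{TM}|_v\circ d\iota_{SM}|_v = d\pi|_v$, and $X\in K_{(v,s)}\s\ker(d\pi|_v)$ forces $d\iota_{SM}|_v X\in\ker\big(d\pi_{TM}|_v\big)$; since this vertical subspace is Lagrangian for $\omega_g$, the one-form $d\iota_{SM}|_v X\into\omega_g$ vanishes on it. For the second slot: $X\in K_{(v,s)}$ means $d\pi|_{\Phi(v,s)}\circ d_v\Phi|_{(v,s)}X=0$, so $d_v\Phi|_{(v,s)}X\in\ker(d\pi|_{\Phi(v,s)})$, and the same identity $\pi_{TM}\circ\iota_{SM}=\pi$ applied at $\Phi(v,s)$ pushes this into $\ker\big(d\pi_{TM}|_{\Phi(v,s)}\big)$, again Lagrangian, so $\big(d\iota_{SM}|_{\Phi(v,s)}\circ d_v\Phi|_{(v,s)}X\big)\into\omega_g$ kills that subspace. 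Hence the left-hand side of \eqref{F_k} lies in $\Range\big(d\pi_{T(M\times M)}|_{(v,\Phi(v,s))}^t\big)$, and (a) holds.

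For smoothness, note that by Proposition \ref{CRkManifold} the bundle $E_{R,k}$ over $\C_{R,k}$ is smooth, and the map $\big((v,s),X\big)\mapsto\big(d\iota_{SM}|_v X\into\omega_g,\ (d\iota_{SM}|_{\Phi(v,s)}\circ d_v\Phi|_{(v,s)}X)\into\omega_g\big)$ is a smooth, fiberwise-linear bundle map from $E_{R,k}$ into $T^*(TM\times TM)$ covering $(v,s)\mapsto(v,\Phi(v,s))$, being assembled from the smooth maps $\iota_{SM}$, $\Phi$, and the bundle isomorphism $\flat_{\omega_g}$. The transpose $d\pi_{T(M\times M)}^{t}$ is a constant-rank bundle monomorphism, so its image is a smooth subbundle of $T^*(TM\times TM)$ and its fiberwise inverse on that image is smooth; composing the smooth section just described (which by (a) lands in that image) with this inverse, and recording the base point $(\pi(v),\pi(\Phi(v,s)))\in M\times M$, yields the desired smooth bundle homomorphism $F_k$.

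I do not expect a serious obstacle here; the lemma is essentially a definition together with a consistency check, and the one step that must not be skipped is the verification in the previous paragraph-before-last: it is precisely the Lagrangian property of the vertical subspaces of $(TM,\omega_g)$, combined with $X\in\ker(d\pi|_v)$ and $d_v\Phi|_{(v,s)}X\in\ker(d\pi|_{\Phi(v,s)})$, that renders both slots of the left-hand side horizontal covectors and so places them in the range of the transpose. If one prefers an entirely explicit argument, the same two verifications can be carried out in natural coordinates using \eqref{omega_g_local}, at the cost of some additional bookkeeping.
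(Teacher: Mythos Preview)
Your argument is correct. The paper itself does not give a proof of this lemma---it simply cites Lemma~4 of \cite{holman2018microlocal}---so there is nothing to compare against in detail; your proof supplies exactly the verification one would expect, namely that both slots of the left-hand side of \eqref{F_k} annihilate the vertical subspaces because $X\in K_{(v,s)}$ forces $d\iota_{SM}|_v X$ and $d\iota_{SM}|_{\Phi(v,s)}\circ d_v\Phi|_{(v,s)}X$ into those Lagrangian subspaces, together with the routine smoothness and uniqueness checks.
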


The proof of this result is essentially the same as that of Lemma $4$ in \cite{holman2018microlocal}, so we do not repeat the details here.    

Our final lemma is the key geometric tool in the proof of Theorem \ref{mainThm}. It will allow us to split $C_{\pi_*} \circ C_L$ into different pieces corresponding to different orders of conjugate pairs, each of which is associated with a Fourier integral operator.

\begin{lemma} \label{geometricLemma}
    Let $(v, s) \in SM \times \R$, $\tilde v = \Phi(v, s)$, $\eta \in T_{\pi(v)}^*M$, and $\tilde\eta \in T_{\pi(\tilde v)}^*M$. Then
    \begin{equation} \label{conjugateCondition}
        dp|_{(v, s)}^t \circ d\pi|_v^t \, \eta = d\Phi|_{(v, s)}^t \circ d\pi|_{\tilde v}^t \, \tilde\eta
    \end{equation}
    if and only if
    \begin{equation} \label{conjugateConclusion}
         d\pi|_v^t \, \eta = d_v\Phi|_{(v, s)}^t \circ d\pi|_{\tilde v}^t \, \tilde\eta \quad \text{and} \quad \eta(v) = \tilde\eta(\tilde v) = 0.
    \end{equation}
    If \eqref{conjugateConclusion} holds and $s \neq 0$ then $(v, s)$ is a conjugate pair, and if $(v, s) \in \C_{R, k}$ then $(\eta, \tilde\eta) \in F_k(E_{R, k})$. Conversely, if $(\eta, \tilde\eta) \in F_k(E_{R, k})$ then \eqref{conjugateConclusion} holds for some $(v, s) \in \C_{R, k}$.
\end{lemma}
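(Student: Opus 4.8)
The plan is to handle the lemma in two stages: the equivalence of \eqref{conjugateCondition} and \eqref{conjugateConclusion}, which is a direct computation with the geodesic flow, and then the statements relating \eqref{conjugateConclusion} to conjugate pairs and to $F_k(E_{R,k})$, which invoke the symplectic structure of $TM$. For the equivalence, write $G$ for the geodesic vector field on $SM$, so that the generator of the flow satisfies $\tfrac{d}{ds}\Phi(v,s)=G(\Phi(v,s))$. Since $p$ is a projection, $dp|_{(v,s)}^t\,\xi=(\xi,0)$, and since $\Phi$ is a flow, $d\Phi|_{(v,s)}^t\,\zeta=\big(d_v\Phi|_{(v,s)}^t\,\zeta,\ \zeta(G(\tilde v))\big)$. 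Comparing the two components of \eqref{conjugateCondition} shows it is equivalent to the first equation in \eqref{conjugateConclusion} together with $\tilde\eta\big(d\pi|_{\tilde v}G(\tilde v)\big)=0$. Differentiating $s\mapsto\pi(\Phi(w,s))=\gamma_w(s)$ gives $d\pi|_wG(w)=w$, so the second condition becomes $\tilde\eta(\tilde v)=0$. Finally, evaluating the first equation of \eqref{conjugateConclusion} on the vector $G(v)\in T_vSM$ and using $d_v\Phi|_{(v,s)}G(v)=G(\tilde v)$ (differentiate $r\mapsto\Phi(v,r+s)$) yields $\eta(v)=\tilde\eta(\tilde v)$, so the condition $\eta(v)=0$ is automatic; hence \eqref{conjugateCondition} and \eqref{conjugateConclusion} are equivalent.

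For the second stage I would work on $TM$ with $\omega_g$, using the following facts, all recalled from or immediate in the excerpt: each $\ker d\pi_{TM}|_w$ is Lagrangian, so $\Range\big(d\pi_{TM}|_w^t\big)$ is its annihilator and therefore equals the set of all covectors $Z\into\omega_g$ with $Z$ vertical at $w$; and $\tilde\Phi_s:=\tilde\Phi(\cdot,s)$ is a symplectomorphism, so $\big(d\tilde\Phi_s|_v\,Z\big)\into\omega_g=\big(d\tilde\Phi_{-s}|_{\tilde v}\big)^t\big(Z\into\omega_g\big)$ for $Z\in T_v(TM)$. I also use the coordinate formula \eqref{omega_g_local}: for a vertical vector $Z$ at $v$, a one-line contraction gives $Z\into\omega_g=d\pi_{TM}|_v^t\big(\flat_g Z\big)$, where $Z$ is viewed as an element of $T_{\pi(v)}M$ via the vertical lift; consequently $\sharp_{\omega_g}\big(d\pi_{TM}|_v^t\eta\big)$ is the vertical lift of $\flat_g^{-1}\eta$, and it is tangent to $SM$ exactly when $\eta(v)=0$, because $d\big(|\cdot|_g^2\big)|_v$ restricts to $2g(v,\cdot)$ on vertical vectors.

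Now suppose \eqref{conjugateConclusion} holds with $s\neq0$ and $\eta\neq0$ (equivalently $\tilde\eta\neq0$, since $d_v\Phi|_{(v,s)}^t$ and $d\pi|_{\tilde v}^t$ are injective). Put $\hat X:=\sharp_{\omega_g}\big(d\pi_{TM}|_v^t\eta\big)$. By the preceding remarks $\hat X$ is vertical, nonzero, and — because $\eta(v)=0$ — tangent to $SM$, so $\hat X=d\iota_{SM}|_v X$ for a nonzero $X\in\ker d\pi|_v$. The crucial step is the identity $d\tilde\Phi_s|_v\hat X=\sharp_{\omega_g}\big(d\pi_{TM}|_{\tilde v}^t\tilde\eta\big)$; granting it, $d\tilde\Phi_s|_v\hat X$ is vertical at $\tilde v$, which by $\iota_{SM}\circ\Phi(\cdot,s)=\tilde\Phi_s\circ\iota_{SM}$ means $d\pi|_{\tilde v}\big(d_v\Phi|_{(v,s)}X\big)=0$, so $X\in K_{(v,s)}$ and $(v,s)$ is a conjugate pair. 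Moreover the two covectors on the left-hand side of \eqref{F_k} are then exactly $d\pi_{TM}|_v^t\eta$ and $d\pi_{TM}|_{\tilde v}^t\tilde\eta$, whence $F_k\big((v,s),X\big)=(\eta,\tilde\eta)$; if $(v,s)\in\C_{R,k}$, then $\big((v,s),X\big)\in E_{R,k}$ and $(\eta,\tilde\eta)\in F_k(E_{R,k})$. The converse statement runs the same identities backwards: if $(\eta,\tilde\eta)=F_k\big((v,s),X\big)$ with $(v,s)\in\C_{R,k}$ and $X\in K_{(v,s)}$, then \eqref{F_k} exhibits $d\pi_{TM}|_v^t\eta$ and $d\pi_{TM}|_{\tilde v}^t\tilde\eta$ as $\big(d\iota_{SM}|_v X\big)\into\omega_g$ and $\big(d\tilde\Phi_s|_v d\iota_{SM}|_v X\big)\into\omega_g$, both vertical (by $X\in K_{(v,s)}$) and tangent to $SM$; reading the coordinate formula backwards gives $\eta(v)=\tilde\eta(\tilde v)=0$, and applying $\big(d\tilde\Phi_{-s}|_{\tilde v}\big)^t$, symplectic invariance, and restriction along $\iota_{SM}$ recovers the first equation of \eqref{conjugateConclusion}.

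The main obstacle is the crucial identity $\big(d\tilde\Phi_s|_v\hat X\big)\into\omega_g=d\pi_{TM}|_{\tilde v}^t\tilde\eta$; by symplectic invariance it is equivalent to the covector identity $\eta\big(d\pi_{TM}|_v W\big)=\tilde\eta\big(d\pi_{TM}|_{\tilde v}(d\tilde\Phi_s|_v W)\big)$ for all $W\in T_v(TM)$, and the difficulty is that \eqref{conjugateConclusion} lives on $SM$ while this identity lives on $TM$. The first equation of \eqref{conjugateConclusion}, pulled back along $\iota_{SM}$, gives the identity for $W$ in the codimension-one subspace $T_vSM$; the one remaining direction is the radial vertical vector $N_v$ generating dilations in the fiber of $TM$, for which $d\pi_{TM}|_v N_v=0$ while $d\pi_{TM}|_{\tilde v}\big(d\tilde\Phi_s|_v N_v\big)=s\tilde v$ (differentiate $\gamma_{(1+t)v}(s)=\gamma_v((1+t)s)$ in $t$ at $t=0$), so both sides of the identity vanish precisely because $\tilde\eta(\tilde v)=0$. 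With this identity established, what remains — the Lagrangian facts, the coordinate contraction, and the check that $F_k((v,s),X)=(\eta,\tilde\eta)$ — is bookkeeping. Two caveats: the assertion that $(v,s)$ is a conjugate pair presumes $(\eta,\tilde\eta)\neq0$, which is automatic for covectors in a canonical relation; and the hypothesis $s\neq0$ is used only so that $(v,s)$ can qualify as a conjugate pair in the sense of Definition \ref{conjugateDef}.
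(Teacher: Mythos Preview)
Your proof is correct and follows essentially the same route as the paper's: the equivalence of \eqref{conjugateCondition} and \eqref{conjugateConclusion} via the $s$-component of $d\Phi^t$, the vector $\hat X=\sharp_{\omega_g}(d\pi_{TM}|_v^t\eta)$, and the passage to $TM$ using the radial direction and $\tilde\eta(\tilde v)=0$ all match the paper's argument. The only notable difference is cosmetic: where the paper picks normal coordinates to check that $\hat X$ is tangent to $SM$ and that $\eta(v)=0$ in the converse, you use the coordinate-free identity $Z\into\omega_g=d\pi_{TM}|_v^t(\flat_g Z)$ for vertical $Z$, which is a slightly cleaner way to handle the same step.
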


\begin{proof}
    To see that \eqref{conjugateCondition} and \eqref{conjugateConclusion} are equivalent, first note that
    \begin{align*}
        dp|_{(v, s)}^t \circ d\pi|_v^t \, \eta &= \left( d\pi|_v^t \, \eta, 0 \right), \\
        d\Phi|_{(v, s)}^t \circ d\pi|_{\tilde v}^t \, \tilde\eta &= \left( d_v\Phi|_{(v, s)}^t \circ d\pi|_{\tilde v}^t \, \tilde\eta, \tilde\eta(\tilde v) \right),
    \end{align*}
    by \eqref{G} and the fact that $d\pi|_{\tilde v}^t \, \tilde\eta (\dot\Phi(v, s)) = \tilde\eta(\tilde v)$. Hence \eqref{conjugateCondition} holds if and only if
    \begin{equation*}
        d\pi|_v^t \, \eta = d_v\Phi|_{(v, s)}^t \circ d\pi|_{\tilde v}^t \, \tilde\eta \quad \text{and} \quad \tilde\eta(\tilde v) = 0.
    \end{equation*}
    Thus \eqref{conjugateConclusion} implies \eqref{conjugateCondition}. For the converse, just apply both sides of \eqref{conjugateCondition} to the vector $(\dot\Phi(v, 0), 0) \in T_{(v, s)}(SM \times \R)$ and deduce that $\eta(v) = \tilde\eta(\tilde v)$.

    Before addressing the claims about conjugate pairs, let us make a few observations. It will be useful to work with $TM$ rather than $SM$. To connect the two, observe that
    \begin{align} 
        d\pi|_v^t &= d\iota_{SM}|_v^t \circ d\pi_{TM}|_v^t, \label{transposed1} \\
        d_v\Phi|_{(v, s)}^t \circ d\iota_{SM}|_{\tilde v}^t &= d\iota_{SM}|_v^t \circ d_v\tilde\Phi|_{(v, s)}^t, \label{transposed2}
    \end{align}
    due to the identities $\pi = \pi_{TM} \circ \iota_{SM}$ and $\iota_{SM} \circ \Phi = \tilde\Phi(\iota_{SM}(\cdot), \cdot)$. Let
    \begin{equation} \label{Xdef}
        X = \left( d\pi_{TM}|_v^t \, \eta \right)^{\sharp_{\omega_g}} \in T_v(TM).
    \end{equation}
    Equivalently, applying $\flat_{\omega_g}$, we have
    \begin{equation} \label{computation1}
        X \into \omega_g = d\pi_{TM}|_v^t \, \eta.
    \end{equation}
    In the next paragraph, we will prove the following analogue of the first condition in \eqref{conjugateConclusion}:
    \begin{equation} \label{conjugateConclusionTM}
        d\pi_{TM}|_v^t \, \eta = d_v\tilde\Phi|_{(v, s)}^t \circ d\pi_{TM}|_{\tilde v}^t \, \tilde\eta.
    \end{equation}
    Assuming \eqref{conjugateConclusionTM} for the moment, the fact that $\tilde\Phi(\cdot, s)$ is a symplectomorphism implies 
    \begin{equation} \label{computation2}
        d\pi_{TM}|_{\tilde v}^t \, \tilde\eta = d_v\tilde\Phi|_{(v, s)} X \into \omega_g.
    \end{equation}

    Now suppose \eqref{conjugateConclusion} holds and $s \neq 0$. We will divide the proof that $(v, s)$ is a conjugate pair into three steps. The first one is to prove \eqref{conjugateConclusionTM}. By \eqref{transposed1}, \eqref{transposed2}, and \eqref{conjugateConclusion},
    \begin{equation*}
        d\iota_{SM}|_v^t \circ d\pi_{TM}|_v^t \, \eta = d\iota_{SM}|_v^t \circ d_v\tilde\Phi|_{(v, s)}^t \circ d\pi_{TM}|_{\tilde v}^t \, \tilde\eta.
    \end{equation*}
    Since $\ker(d\iota_{SM}|_v^t)$ is the span of the differential of $TM \ni w \mapsto |w|_g^2$, this means
    \begin{equation*}
        d\pi_{TM}|_v^t \, \eta = d_v\tilde\Phi|_{(v, s)}^t \circ d\pi_{TM}|_{\tilde v}^t \, \tilde\eta + \frac{\tau}{2} \left. d(|w|_g^2) \right|_v
    \end{equation*}
    for some $\tau \in \R$. Applying both sides to a radial vector $r \in T_v(TM)$, we find
    \begin{equation*}
        0 = \tilde\eta \left( d\pi_{TM}|_{\tilde v} \circ d_v\tilde\Phi|_{(v, s)} \, r \right) + \tau.
    \end{equation*}
    Since $\tilde\eta(\tilde v) = 0$ by assumption and the vector $d\pi_{TM}|_{\tilde v} \circ d_v\tilde\Phi|_{(v, s)} \, r$ is parallel to $\tilde v$, this implies that $\tau = 0$ and hence completes the proof of \eqref{conjugateConclusionTM}.

    The second step is to show that \eqref{Xdef} is in $\Range(d\iota_{SM}|_v)$. Since \eqref{computation1} implies that $X \into \omega_g$ vanishes on $\ker(d\pi_{TM}|_v)$, which is a Lagrangian subspace, $X$ must be in $\ker(d\pi_{TM}|_v)$. Choose normal coordinates $\left( x^i \right)$ on $M$ centered at $\pi(v)$ such that
    \begin{equation} \label{vNormalCoordinates}
        v = d\pi_{TM}|_v \, \frac{\partial}{\partial x^1},
    \end{equation}
    and let $\left( x^i, v^i \right)$ be natural coordinates on $TM$. Then $\Range(d\iota_{SM}|_v)$ is the span of the vectors $\partial / \partial v^2, \dots, \partial / \partial v^n$. Since $X$ is in $\ker(d\pi_{TM}|_v)$, we can write
    \begin{equation*}
        X = a^j \frac{\partial}{\partial v^j}
    \end{equation*}
    for some $a^j \in \R$. Using \eqref{computation1}, \eqref{vNormalCoordinates}, and the assumption $\eta(v) = 0$, we find
    \begin{equation*}
        \omega_g \left( X, \frac{\partial}{\partial x^1} \right) = 0.
    \end{equation*}
    Using \eqref{omega_g_local} and the fact that $g_{ij} = \delta_{ij}$ at $\pi(v)$, this implies $a^1 = 0$. Hence $X$ is in $\Range(d\iota_{SM}|_v)$, so we can define the vector $d\iota_{SM}|_v^{-1} X \in T_v SM$.

    The third step is to show that $d\iota_{SM}|_v^{-1} X$ is in $K_{(v, s)}$, meaning
    \begin{equation*} 
        d\iota_{SM}|_v^{-1} X \in \ker \left( d\pi|_{\tilde v} \circ d_v\Phi|_{(v, s)} \right) \cap \ker(d\pi|_v). 
    \end{equation*}
    Since $X$ is in $\ker(d\pi_{TM}|_v)$ and $d\pi|_v = d\pi_{TM}|_v \circ d\iota_{SM}|_v$, we know $d\iota_{SM}|_v^{-1} X$ is in $\ker(d\pi|_v)$. Hence it is enough to prove that
    \begin{equation*}
        d\pi|_{\tilde v} \circ d_v\Phi|_{(v, s)} \circ d\iota_{SM}|_v^{-1} X = 0.
    \end{equation*}
    By the transposes of \eqref{transposed1} and \eqref{transposed2}, this is equivalent to showing
    \begin{equation*} 
        d\pi_{TM}|_{\tilde v} \circ d_v\tilde\Phi|_{(v, s)} X = 0.
    \end{equation*}
    But \eqref{computation2} implies that $d_v\tilde\Phi|_{(v, s)} X \into \omega_g$ vanishes on the Lagrangian subspace $\ker(d\pi_{TM}|_{\tilde v})$, so $d_v\tilde\Phi|_{(v, s)} X$ is indeed in $\ker(d\pi_{TM}|_{\tilde v})$. This proves that $(v, s)$ is a conjugate pair.

    Now suppose $(v, s) \in \C_{R, k}$. Since $d\iota_{SM}|_v^{-1} X$ is in $K_{(v, s)}$, this means
    \begin{equation*}
        \left( (v, s), d\iota_{SM}|_v^{-1} X \right) \in E_{R, k}.
    \end{equation*}
    Using \eqref{computation1} in the first line below and \eqref{transposed2} (transposed) and \eqref{computation2} in the second, we find
    \begin{align*}
        \left( d\iota_{SM}|_v \circ d\iota_{SM}|_v^{-1} X \right) \into \omega_g &= X \into \omega_g = d\pi_{TM}|_v^t \, \eta, \\
        \left( d\iota_{SM}|_{\tilde v} \circ d_v\Phi|_{(v, s)} \circ d\iota_{SM}|_v^{-1} X \right) \into \omega_g &= d_v\tilde\Phi|_{(v, s)} X \into \omega_g = d\pi_{TM}|_{\tilde v}^t \, \tilde\eta.
    \end{align*}
    Hence $(\eta, \tilde\eta) = F_k \big( (v, s), d\iota_{SM}|_v^{-1} X \big)$, which proves that $(\eta, \tilde\eta) \in F_k(E_{R, k})$.

    Conversely, suppose $(\eta, \tilde\eta) = F_k \big( (v, s), X \big)$. Unpacking \eqref{F_k}, this means
    \begin{align}
        d\iota_{SM}|_v X \into \omega_g &= d\pi_{TM}|_v^t \, \eta, \label{F_kEquivalent1} \\
        \left( d\iota_{SM}|_{\tilde v} \circ d_v\Phi|_{(v, s)} X \right) \into \omega_g &= d\pi_{TM}|_{\tilde v}^t \, \tilde\eta \label{F_kEquivalent2}.
    \end{align} 
    Let $Y \in T_{\tilde v}(TM)$. Then \eqref{F_kEquivalent2} and the transpose of \eqref{transposed2} imply that
    \begin{equation*}
        \omega_g \left( d_v\tilde\Phi|_{(v, s)} \circ d\iota_{SM}|_v X, Y \right) = d\pi_{TM}|_{\tilde v}^t \, \tilde\eta(Y).
    \end{equation*}
    Using that $\tilde\Phi(\cdot, s)$ is a symplectomorphism together with \eqref{F_kEquivalent1}, we find
    \begin{equation*}
        \left( d_v\tilde\Phi|_{(v, s)}^t \right)^{-1} \circ d\pi_{TM}|_v^t \, \eta(Y) = d\pi_{TM}|_{\tilde v}^t \, \tilde\eta(Y).
    \end{equation*}
    Hence \eqref{conjugateConclusionTM} holds in this direction of the proof as well. Applying $d\iota_{SM}|_v^t$ to both sides of that equation and rewriting with \eqref{transposed1} and \eqref{transposed2} establishes the first condition in \eqref{conjugateConclusion}.
    
    Now take the same natural coordinates on $TM$ described above \eqref{vNormalCoordinates}. Then
    \begin{equation*}
        d\iota_{SM}|_v X = \sum_{j = 2}^n a^j \frac{\partial}{\partial v^j}
    \end{equation*}
    for some $a^j \in \R$. By \eqref{vNormalCoordinates}, \eqref{F_kEquivalent1}, and \eqref{omega_g_local}, we get
    \begin{equation*}
        \eta(v) = d\pi_{TM}|_v^t \, \eta \left( \frac{\partial}{\partial x^1} \right) = (d\iota_{SM}|_v X \into \omega_g) \left( \frac{\partial}{\partial x^1} \right) = \omega_g \left( d\iota_{SM}|_v X, \frac{\partial}{\partial x^1} \right) = 0,
    \end{equation*}
    and $\tilde\eta(\tilde v) = 0$ by a similar argument. This establishes the second condition in \eqref{conjugateConclusion}.
\end{proof}


\subsection{Final Decomposition of the L\'evy Generator}

Now we can state our main result. It refines Theorem \ref{initialDecomp} by decomposing $\pi_* \circ R_a \circ \pi^*$ into a smoothing operator and a sum of Fourier integral operators, assuming there are no singular conjugate pairs. 

\begin{thm} \label{mainThm}
    Suppose $\C_S = \emptyset$. Then for $k = 1$ to $n - 1$, the sets
    \begin{equation*}
        C_{A_k} = F_k(E_{R, k}) \s T^*(M \times M)
    \end{equation*}
    are either local canonical relations or empty. Let $C_{A_{k, 1}}, \dots, C_{A_{k, M_k}}$ be the connected components of $C_{A_k}$. Let $\A_{2\alpha}$ and $\A_0$ be the pseudodifferential operators from Theorem \ref{initialDecomp}. Then
    \begin{equation*}
        \A = \A_{2\alpha} + \A_0 + \A_{-\infty} + \sum_{k = 1}^{n - 1} \left( \sum_{m = 1}^{M_k} A_{k, m} \right),
    \end{equation*}
    where $\A_{-\infty}$ is a smoothing operator, and for each $k$ either 
    \begin{equation*}
        A_{k, m} \in \I^{-(n - k + 1) / 2}(M \times M, C_{A_{k, m}}^\prime),
    \end{equation*}
    or $M_k = 1$ and $A_{k, 1} = 0$ if $C_{A_k} = \emptyset$.
\end{thm}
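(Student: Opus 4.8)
The plan is to continue from Theorem~\ref{initialDecomp}. Since Theorem~\ref{R_aThm} shows that $R_a$ preserves $C^\infty(SM)$, we have $\A = \A_{2\alpha} + \A_0 + \pi_* \circ L$ with $L = L_1 \in \I^{-(n+1)/4}(SM \times M, C_L^\prime)$ by Theorem~\ref{compWithPullback}, so everything reduces to $B := \pi_* \circ L$. One cannot run the clean intersection calculus on $B$ as a whole: by Lemma~\ref{geometricLemma}, a point of $C_{\pi_*} \circ C_L$ is produced either by $(v,s)$ with $s = 0$ (giving the conormal of the diagonal, which does not actually meet $\WF^\prime(B)$ because $a$ vanishes near $0$) or by a conjugate pair $(v,s)$ with $s \neq 0$, and conjugate pairs of different orders $k$ yield intersections of different excess. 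So the first task is to decouple these contributions.

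To do this I would localize in $(v,s)$. Because $\C_S = \emptyset$, every conjugate pair lies in a unique $\C_{R,k}$ and has, by definition, a neighborhood meeting only order-$k$ conjugate pairs; unioning these gives an open set $U_k \supseteq \C_{R,k}$ whose only conjugate pairs have order $k$. With $U_0 := (SM \times (\R \sm \{0\})) \sm \bigcup_k \C_{R,k}$ (open, since the conjugate-pair condition is closed away from $s = 0$), the $U_j$ cover $SM \times (\R \sm \{0\}) \supseteq SM \times \supp a$, so after a harmless cutoff there is a subordinate partition of unity $\psi_0, \dots, \psi_{n-1} \in C^\infty(SM \times \R)$ with $\sum_j a\psi_j = a$. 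Then $L = \sum_j L_{\psi_j}$ with $L_{\psi_j} = R_{a,\psi_j} \circ \pi^*$, hence $B = \sum_j \pi_* \circ L_{\psi_j}$. For $j = 0$: Lemma~\ref{R_aWF} forces $\WF^\prime(R_{a,\psi_0})$ to involve only $s \neq 0$ with footpoint in $\supp \psi_0$, so composing with $C_{\pi^*}$ and $C_{\pi_*}$ and invoking Lemma~\ref{geometricLemma} shows that a covector in $\WF^\prime(\pi_* \circ L_{\psi_0})$ would require a conjugate pair inside $\supp \psi_0 \s U_0$ — impossible. Thus $\A_{-\infty} := \pi_* \circ L_{\psi_0}$ is smoothing.

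For $1 \leq k \leq n-1$, points of $\C_{R,k}$ lie in no $U_j$ with $j \neq k$, so $\psi_k \equiv 1$ near $\C_{R,k}$, and modulo smoothing we may take the canonical relation of $L_{\psi_k}$ to be the part of $C_L$ lying over $\supp\psi_k \cap \{s \neq 0\}$. I would then run the clean intersection calculus on $\pi_* \circ L_{\psi_k}$ along the template of Theorems~\ref{R_aThm} and~\ref{compWithPullback}: verify that $C := (C_{\pi_*} \times C_{L_{\psi_k}}) \cap (T^*M \times \Delta(T^*SM) \times T^*M)$ is a clean intersection, now of excess $k - 1$; verify that the relevant projection is proper by the same boundedness argument, using compactness of $SM$; parametrize $C$ by the manifold $E_{R,k}$ from Proposition~\ref{CRkManifold} — this is exactly where $\C_S = \emptyset$ is indispensable, as it is what makes $\C_{R,k}$ a manifold and $E_{R,k}$ a vector bundle — and read off from Lemma~\ref{geometricLemma} that the resulting local canonical relation is $C_{A_k} = F_k(E_{R,k})$. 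The dimension count $\dim E_{R,k} = (2n-1)+k = 2n + (k-1)$ confirms the excess, and the orders give $-(n+1)/4 + (1-n)/4 + (k-1)/2 = -(n-k+1)/2$. One then argues that $C_{A_k}$ has finitely many connected components $C_{A_{k,1}}, \dots, C_{A_{k,M_k}}$ and splits $\pi_* \circ L_{\psi_k} = \sum_m A_{k,m}$ accordingly, with $A_{k,m} \in \I^{-(n-k+1)/2}(M \times M, C_{A_{k,m}}^\prime)$ (taking $M_k = 1$, $A_{k,1} = 0$ when $C_{A_k} = \emptyset$). Summing over $j$ and $k$ yields the stated decomposition.

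I expect two steps to be the real work. The first is the clean intersection verification for $1 \leq k \leq n-1$: unlike in Theorems~\ref{R_aThm} and~\ref{compWithPullback}, the composition now has positive excess, so $E_{R,k}$ covers $C_{A_k}$ with $(k-1)$-dimensional fibers rather than diffeomorphically, and the parametrization — through $F_k$ and the bundle structure of $E_{R,k}$ — must be arranged so that the tangent-space identification still closes up. The second, and in my view the most delicate, is the finiteness of $M_k$: this genuinely fails for $\C_{R,k}$ itself, since conjugate pairs can recur (on the sphere, at every multiple of $\pi$), so one must show that $F_k$ collapses the recurring sheets — that $C_{A_k}$ is, effectively, a properly immersed conic Lagrangian — via a compactness argument exploiting $\C_S = \emptyset$ and the compactness of $SM$.
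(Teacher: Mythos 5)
Your approach is essentially the paper's: both localize via a spatial partition of unity in $(v,s)$ subordinate to neighborhoods of the various $\C_{R,k}$, use Lemma~\ref{R_aWF} together with Lemma~\ref{geometricLemma} to identify the no-conjugate-pair piece as smoothing, and then run the clean intersection calculus with excess $k-1$ on each remaining piece, with Proposition~\ref{CRkManifold} supplying the manifold structure and $F_k$ parametrizing the resulting canonical relations; your excess and order computations, $\dim E_{R,k} = 2n + (k-1)$ and $-(n+1)/4 + (1-n)/4 + (k-1)/2 = -(n-k+1)/2$, are exactly the ones used in the paper.

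The one organizational difference is that the paper refines the spatial partition one step further than you do: it chooses cutoffs $\psi_{k,m}$, each supported near a single connected component of $\C_{R,k}$, and sets $A_{k,m} := \pi_* \circ L_{\psi_{k,m}}$ directly, whereas you cut only by order $k$ and then split $\pi_* \circ L_{\psi_k}$ a posteriori by a microlocal partition of unity associated to the components of $C_{A_k}$. Both organizations reach the same decomposition; the paper's avoids the extra microlocal split but then requires that the components of $\C_{R,k}$ match up with the components $C_{A_{k,m}}$ of $C_{A_k}$. Your observation that finiteness of $M_k$ is the delicate step is well taken: on the round sphere $\C_{R,n-1}$ has infinitely many connected components (one at each nonzero multiple of $\pi$), so the paper's construction of finitely many $U_{k,m}$, one per component of $\C_{R,k}$, and its implicit identification of that count with the number of components of $C_{A_k}$, is not justified as written; the recurrence also threatens the properness of the projection in the clean intersection calculus. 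Your instinct that $F_k$ collapses the recurring sheets and that one should exploit the decay of $a$ and the compactness of $SM$ to control the infinite sum is the right one, and is genuine work not carried out in the paper. Apart from this, which both you and the paper leave at the same level of rigor, your proposal follows the paper's route and correctly identifies the pieces that make it work.
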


\begin{proof}
    We must decompose $\pi_* \circ L$ into a smoothing operator and a sum of Fourier integral operators. Though the clean intersection calculus does not directly apply to $\pi_* \circ L$, we will cut up this operator so that it applies to each separate piece. 
    
    First let us describe $C_{\pi_*} \circ C_L$. By Lemma \ref{pushPullLemma} and Theorem \ref{compWithPullback},
    \begin{align*} 
        C_{\pi_*} \circ C_L = \Big\{ (\eta, \tilde\eta) \in T^*M & \times T^*M : \exists \, (v, s) \in SM \times \R \ \text{such that} \\
        & \ dp|_{(v, s)}^t \circ d\pi|_v^t \, \eta = d\Phi|_{(v, s)}^t \circ d\pi|_{\Phi(v, s)}^t \, \tilde\eta \Big\}.
    \end{align*}        
    The requirement in $C_{\pi_*} \circ C_L$ is precisely \eqref{conjugateCondition}, which is equivalent to \eqref{conjugateConclusion} by Lemma \ref{geometricLemma}. We can also use this lemma to cut up $C_{\pi_*} \circ C_L$ into several pieces according to different orders of conjugate pairs. Indeed, if we take $s = 0$ and any $v \in SM$ such that $\eta(v) = 0$, then $v$, $s$, and $\eta$ satisfy \eqref{conjugateConclusion}, so one piece is the diagonal
    \begin{equation*}
        \Delta := \{ (\eta, \eta) \in T^*M \times T^*M \}.
    \end{equation*}
    If $(\eta, \tilde\eta) \in C_{\pi_*} \circ C_L$ and $\eta \neq \tilde\eta$, then \eqref{conjugateConclusion} is satisfied for some $(v, s) \in SM \times \R$, and $s \neq 0$ because $d\pi|_v^t$ is injective. Then Lemma \ref{geometricLemma} and the assumption $\C_S = \emptyset$ imply that
    \begin{equation*}
        C_{\pi_*} \circ C_L = \Delta \cup \left( \bigcup_{k = 1}^{n - 1} C_{A_k} \right).
    \end{equation*}
    
    Our goal is to write $\pi_* \circ L$ as a sum of Fourier integral operators, each having a canonical relation contained in a single set of this union. Since $\C_S = \emptyset$, we can find open subsets $U_k$ of $SM \times \R$ with disjoint closures such that $\C_{R, k} \s U_k$ for $k = 1$ to $n - 1$. Write
    \begin{equation*}
        U_k = \bigcup_{m = 1}^{M_k} U_{k, m},
    \end{equation*}
    where each open set $U_{k, m}$ contains exactly one of the connected components of $\C_{R, k}$. Then we can construct a partition of unity $\{ \psi_{k, m} \}$ on $SM \times \R$ such that 
    \begin{equation*}
        \supp(\psi_{0, 1}) \s (SM \times \R) \sm \left( \bigcup_{k = 1}^{n - 1} \C_{R, k} \right),
    \end{equation*}
     and $\supp(\psi_{k, m}) \s U_{k, m}$ for $k = 1$ to $n - 1$ and $m = 1$ to $M_k$. Setting $M_0 = 1$, we have
    \begin{equation*}
        \pi_* \circ L = \sum_{k = 0}^{n - 1} \sum_{m = 1}^{M_k} \pi_* \circ L_{\psi_{k, m}}.
    \end{equation*}
    By Lemma \ref{R_aWF}, the twisted wave front set of $L_{\psi_{k, m}}$ is contained in
    \begin{align*} 
        C_{L, k, m} := \big\{ (\xi, \tilde\eta) \in T^*SM \times T^*M : \exists \, s \neq 0 \ \text{such that} \ (\pi_{T^*SM}(\xi), s) \in \supp(\psi_{k, m}) \\
        \ \text{and} \ dp|_{(\pi_{T^*SM}(\xi), s)}^t \, \xi = d\Phi|_{(\pi_{T^*SM}(\xi), s)}^t \circ d\pi|_{\Phi(\pi_{T^*SM}(\xi), s)}^t \, \tilde\eta \big\}.
    \end{align*}
    Therefore we obtain smoothing operators except near points in
    \begin{align*} 
            (C_{\pi_*} \circ C_{L, k, m})^\prime = \Big\{ & (\eta, -\tilde\eta) \in T^*M \times T^*M : \exists \, (v, s) \in SM \times (\R \sm \{ 0 \}) \ \text{such that} \\
            & \ (v, s) \in \supp(\psi_{k, m}) \ \text{and} \ dp|_{(v, s)}^t \circ d\pi|_v^t \, \eta = d\Phi|_{(v, s)}^t \circ d\pi|_{\Phi(v, s)}^t \, \tilde\eta \Big\}.
    \end{align*}        
    Since Lemma \ref{geometricLemma} implies that $C_{\pi_*} \circ C_{L, 0, 1}$ is empty, 
    \begin{equation*}
        \A_{-\infty} := \pi_* \circ L_{\psi_{0, 1}}
    \end{equation*}
    is a smoothing operator. Lemma \ref{geometricLemma} also implies that $C_{\pi_*} \circ C_{L, k, m} = C_{A_{k, m}}$ for $k = 1$ to $n - 1$ and $m = 1$ to $M_k$. If any of these compositions are empty, we can absorb the corresponding operator into $\A_{-\infty}$ and set $A_{k, m} = 0$.
    
    It remains to show that if $C_{\pi_*} \circ C_{L, k, m}$ is nonempty, then the clean intersection calculus applies to $\pi_* \circ L_{\psi_{k, m}}$. We again refer to points (i)-(ii) in the proof of Theorem \ref{R_aThm}. (Since we omit point (iii), we only obtain local canonical relations in general.) Define
    \begin{equation*}
        C_{k, m} = (C_{\pi_*} \times C_{L, k, m}) \cap (T^*M \times \Delta(T^*SM) \times T^*M).
    \end{equation*}
    Let $E_{R, k, m}$ be the restriction of $E_{R, k}$ to the $m$th connected component of $\C_{R, k}$, and let $F_k^\ell$ be the $\ell$th component function of $F_k$ for $\ell = 1, 2$. Then Lemma \ref{geometricLemma} implies that $C_{k, m}$ is a connected component of
    \begin{equation} \label{componentOf}
        \begin{aligned}
            \Big\{ \left( F_k^1 \big( (v, s), X \big), d\iota_{SM}|_v^t(d\iota_{SM}|_v X \into \omega_g), d\iota_{SM}|_v^t(d\iota_{SM}|_v X \into \omega_g), F_k^2 \big( (v, s), X \big) \right) : \\
            \big( (v, s), X \big) \in E_{R, k, m} \Big\}.
        \end{aligned}
    \end{equation}
    Thus, to show $C_{k, m}$ is an embedded submanifold, it is enough to prove that
    \begin{equation} \label{ERkmEmbedding}
        E_{R, k, m} \ni \big( (v, s), X \big) \mapsto \left( v, d\iota_{SM}|_v^t(d\iota_{SM}|_v X \into \omega_g) \right) \in T^*SM
    \end{equation}
    is a smooth embedding. Let $p_{k, m}$ be the projection of the $m$th connected component of $\C_{R, k}$ onto $SM$. Let $\V_{R, k, m}$ be the pullback of $\ker(d\pi)$ by $p_{k, m}$. Then $E_{R, k, m}$ is a smooth subbundle of $\V_{R, k, m}$ (as in the proof of Proposition \ref{CRkManifold}), so it suffices to show that the extension of \eqref{ERkmEmbedding} to $\V_{R, k, m}$ is a smooth embedding. Observe that the bundle homomorphism
    \begin{equation*}
        E_{R, k, m} \ni \big( (v, s), X \big) \mapsto (v, X) \in TSM
    \end{equation*}
    covers $p_{k, m}$ and is a smooth embedding (because it is a proper injective immersion). This implies that $p_{k, m}$ is a smooth embedding, so $\V_{R, k, m}$ is smoothly isomorphic to the restriction of $\ker(d\pi)$ to $\Range(p_{k, m})$. Hence it suffices to show that
    \begin{equation*}
        \ker(d\pi)|_{\Range(p_{k, m})} \ni (v, X) \mapsto \left( v, d\iota_{SM}|_v^t(d\iota_{SM}|_v X \into \omega_g) \right) \in T^*SM
    \end{equation*}
    is a smooth embedding. But \eqref{omega_g_local} implies that this map is injective in each fiber, and hence a smooth bundle isomorphism onto its image. Therefore \eqref{ERkmEmbedding} is a smooth embedding, which proves that $C_{k, m}$ is a connected embedded submanifold of dimension $2n + k - 1$.

    Now let $c = (c_1, c_2, c_3, c_4) \in C_{k, m}$ be arbitrary, and consider the set
    \begin{align*}
        D_{k, m} &:= T_c(C_{\pi_*} \times C_{L, k, m}) \cap T_c(T^*M \times \Delta(T^*SM) \times T^*M) \\
        &\s T_{c_1}(T^*M) \times T_{c_2}(T^*SM) \times T_{c_3}(T^*SM) \times T_{c_4}(T^*M).
    \end{align*}
    Since $T_c C_{k, m}$ has dimension $2n + k - 1$ and is contained in $D_{k, m}$, the intersection is clean if the dimension of $D_{k, m}$ is at most $2n + k - 1$. Suppose $(Y_1, Y_2, Y_3, Y_4) \in D_{k, m}$. Then $Y_2 = Y_3$, and an examination of $C_{L, k, m}$ shows that $Y_3$ determines $Y_4$. Hence the dimension of $D_{k, m}$ is at most $3n - 1$ (the dimension of $C_{\pi_*}$). But for fixed $(v, s)$, the set
    \begin{equation*}
        \left\{ d\iota_{SM}|_v^t(d\iota_{SM}|_v X \into \omega_g) : \big( (v, s), X \big) \in E_{R, k, m} \right\}
    \end{equation*}
    is a $k$-dimensional vector space contained in
    \begin{equation*}
        \left\{ d\pi|_v^t \, \eta : \eta \in T_{\pi(v)}^*M \right\}.
    \end{equation*} 
    Therefore the dimension of $D_{k, m}$ is at most $3n - 1 - (n - k) = 2n + k - 1$, so the intersection is clean with excess $k - 1$. Because $C_{k, m}$ is a component of \eqref{componentOf}, the projection map
    \begin{equation*}
        \pi_{k, m} : C_{k, m} \to T^*M \times T^*M
    \end{equation*}
    is proper by the same argument as the proof of Theorem \ref{R_aThm}. Since $\pi_*$ and $L_{\psi_{k, m}}$ are Fourier integral operators of order $(1 - n) / 4$ and $-(n + 1) / 4$, respectively, we conclude that
    \begin{equation*}
        A_{k, m} := \pi_* \circ L_{\psi_{k, m}}
    \end{equation*}
    is in $\I^{-(n - k + 1) / 2}(M \times M, C_{A_{k, m}}^\prime)$ whenever the set $C_{A_{k, m}}$ is nonempty.
\end{proof}

Two special cases of Theorem \ref{mainThm} are worth mentioning. First, if $(M, g)$ is Anosov then it has no conjugate points \cite{ruggiero1991creation}, so each Fourier integral operator $A_{k, m}$ is zero and we recover Theorem $1.6$ in \cite{chaubet2022geodesic}. Second, Theorem \ref{mainThm} covers all possibilities in two dimensions because singular conjugate pairs cannot exist (since conjugate pairs can only have order $1$). In higher dimensions, the generic case includes singular conjugate pairs \cite{arnol1972normal, klok1983generic}.


\printbibliography[heading=bibintoc]

@book{anosov1969geodesic,
  title={Geodesic flows on closed Riemann manifolds with negative curvature},
  author={Anosov, D.V.\@},
  number={89-90},
  year={1969},
  publisher={American Mathematical Society}
}

@article{applebaum2000isotropic,
  title={Isotropic L{\'e}vy processes on Riemannian manifolds},
  author={Applebaum, D.\@ and Estrade, A.\@},
  journal={Ann.\@ Probab.\@},
  pages={166--184},
  year={2000},
  publisher={JSTOR}
}

@article{arnol1972normal,
  title={Normal forms for functions near degenerate critical points, the Weyl groups of $A_k$, $D_k$, $E_k$ and Lagrangian singularities},
  author={Arnol'd, V.I.\@},
  journal={Funct.\@ Anal.\@ App.\@},
  volume={6},
  pages={254--272},
  year={1972},
  publisher={Springer}
}

@article{bressloff2013stochastic,
  title={Stochastic models of intracellular transport},
  author={Bressloff, P.C.\@ and Newby, J.M.\@},
  journal={Rev.\@ Mod.\@ Phys.\@},
  volume={85},
  number={1},
  pages={135},
  year={2013},
  publisher={APS}
}

@article{chaubet2022geodesic,
  title={Geodesic L\'evy Flights and Expected Stopping Time for Random Searches},
  author={Chaubet, Y.\@ and Bonthonneau, Y.G.\@ and Lefeuvre, T.\@ and Tzou, L.\@},
  journal={arXiv preprint arXiv:2211.13973},
  year={2022}
}

@article{duistermaat1975spectrum,
  title={The spectrum of positive operators and periodic geodesies},
  author={Duistermaat, J.J.\@ and Guillemin, V.\@},
  journal={Invent.\@ Math},
  volume={29},
  pages={39--79},
  year={1975}
}

@inproceedings{elworthy1988geometric,
  title={Geometric aspects of diffusions on manifolds},
  author={Elworthy, K.D.\@},
  year={1988},
  organization={Springer}
}

@article{gangolli1964isotropic,
  title={Isotropic infinitely divisible measures on symmetric spaces},
  author={Gangolli, R.\@},
  journal={Acta Math.\@},
  volume={111},
  pages={213--46},
  year={1964}
}

@article{gangolli1965sample,
  title={Sample functions of certain differential processes on symmetric spaces},
  author={Gangolli, R.\@},
  journal={Pac.\@ J.\@ Math.\@},
  volume={15},
  number={2},
  pages={477--496},
  year={1965},
  publisher={Mathematical Sciences Publishers}
}

@article{guillemin1975fourier,
  title={Fourier integral operators from the Radon transform point of view},
  author={Guillemin, V.\@ and Schaeffer, D.\@},
  journal={Proc.\@ Symp.\@ Pure Math.\@},
  volume={27},
  pages={297--300},
  year={1975}
}

@article{heidari2017efficient,
  title={An efficient modified grey wolf optimizer with L{\'e}vy flight for optimization tasks},
  author={Heidari, A.A.\@ and Pahlavani, P.\@},
  journal={Appl.\@ Soft Comput.\@},
  volume={60},
  pages={115--134},
  year={2017},
  publisher={Elsevier}
}

@article{holman2018microlocal,
  title={On the microlocal analysis of the geodesic X-ray transform with conjugate points},
  author={Holman, S.\@ and Uhlmann, G.\@},
  journal={J.\@ Differ.\@ Geom.\@},
  volume={108},
  number={3},
  pages={459--494},
  year={2018},
  publisher={Lehigh University}
}

@book{hormander2009analysis,
  title={The analysis of linear partial differential operators IV: Fourier integral operators},
  author={H{\"o}rmander, L.\@},
  year={1985},
  publisher={Springer}
}

@book{hsu2002stochastic,
  title={Stochastic analysis on manifolds},
  author={Hsu, E.P.\@},
  number={38},
  year={2002},
  publisher={American Mathematical Society}
}

@article{hunt1956semi,
  title={Semi-groups of measures on Lie groups},
  author={Hunt, G.A.\@},
  journal={Trans.\@ Am.\@ Math.\@ Soc.\@},
  volume={81},
  number={2},
  pages={264--293},
  year={1956}
}

@article{kaidi2022dynamic,
  title={Dynamic levy flight chimp optimization},
  author={Kaidi, W.\@ and Khishe, M.\@ and Mohammadi, M.\@},
  journal={Knowl.\@-Based Syst.\@},
  volume={235},
  pages={107625},
  year={2022},
  publisher={Elsevier}
}

@article{klok1983generic,
  title={Generic singularities of the exponential map on Riemannian manifolds},
  author={Klok, F.\@},
  journal={Geom.\@ Dedicata},
  volume={14},
  number={4},
  pages={317--342},
  year={1983},
  publisher={Springer}
}

@incollection{knieper2002hyperbolic,
  title={Hyperbolic dynamics and Riemannian geometry},
  author={Knieper, G.\@},
  booktitle={Handbook of dynamical systems},
  volume={1},
  pages={239--319},
  year={2002},
  publisher={Elsevier}
}

@article{monard2015geodesic,
  title={The geodesic ray transform on Riemannian surfaces with conjugate points},
  author={Monard, F.\@ and Stefanov, P.\@ and Uhlmann, G.\@},
  journal={Commun.\@ Math.\@ Phys.\@},
  volume={337},
  number={3},
  pages={1491--1513},
  year={2015},
  publisher={Springer}
}

@article{nursultanov2023narrow,
  title={The narrow capture problem on general Riemannian surfaces},
  author={Nursultanov, M.\@ and Trad, W.\@ and Tzou, J.C.\@ and Tzou, L.\@},
  journal={Differ.\@ Integral Equ.\@},
  volume={36},
  number={11/12},
  pages={877--906},
  year={2023},
  publisher={Khayyam Publishing, Inc. West Palm Beach, FL, USA}
}

@article{nursultanov2022narrow,
  title={Narrow escape problem in the presence of the force field},
  author={Nursultanov, M.\@ and Trad, W.\@ and Tzou, L.\@},
  journal={Math.\@ Methods Appl.\@ Sci.\@},
  volume={45},
  number={16},
  pages={10027--10051},
  year={2022},
  publisher={Wiley Online Library}
}

@article{nursultanov2021mean,
  title={On the mean first arrival time of Brownian particles on Riemannian manifolds},
  author={Nursultanov, M.\@ and Tzou, J.C.\@ and Tzou, L.\@},
  journal={J.\@ Math.\@ Pures Appl.\@},
  volume={150},
  pages={202--240},
  year={2021},
  publisher={Elsevier}
}

@article{ruggiero1991creation,
  title={On the creation of conjugate points},
  author={Ruggiero, R.O.\@},
  journal={Math.\@ Z.\@},
  volume={208},
  pages={41--55},
  year={1991},
  publisher={Springer}
}

@incollection{shlesinger1986levy,
  title={L{\'e}vy walks versus L{\'e}vy flights},
  author={Shlesinger, M.F.\@ and Klafter, J.\@},
  booktitle={On growth and form: Fractal and non-fractal patterns in physics},
  pages={279--283},
  year={1986},
  publisher={Springer}
}

@article{stefanov2012geodesic,
  title={The geodesic X-ray transform with fold caustics},
  author={Stefanov, P.\@ and Uhlmann, G.\@},
  journal={Anal.\@ PDE},
  volume={5},
  number={2},
  pages={219--260},
  year={2012},
  publisher={Mathematical Sciences Publishers}
}

@article{tzou2023challenging,
  title={Challenging the L{\'e}vy Flight Foraging Hypothesis-A Joint Monte Carlo and Numerical PDE Approach},
  author={Tzou, J.C.\@ and Tzou, L.\@},
  journal={arXiv preprint arXiv:2302.13976},
  year={2023}
}

@article{viswanathan1996levy,
  title={L{\'e}vy flight search patterns of wandering albatrosses},
  author={Viswanathan, G.M.\@ and Afanasyev, V.\@ and Buldyrev, S.V.\@ and Murphy, E.J.\@ and Prince, P.A.\@ and Stanley, H.E.\@},
  journal={Nature},
  volume={381},
  number={6581},
  pages={413--415},
  year={1996},
  publisher={Nature Publishing Group UK London}
}

@article{viswanathan1999optimizing,
  title={Optimizing the success of random searches},
  author={Viswanathan, G.M.\@ and Buldyrev, S.V.\@ and Havlin, S.\@ and Da Luz, M.G.E.\@ and Raposo, E.P.\@ and Stanley, H.E.\@},
  journal={Nature},
  volume={401},
  number={6756},
  pages={911--914},
  year={1999},
  publisher={Nature Publishing Group UK London}
}

@article{warner1965conjugate,
  title={The conjugate locus of a Riemannian manifold},
  author={Warner, F.W.\@},
  journal={Am.\@ J.\@ Math},
  volume={87},
  number={3},
  pages={575--604},
  year={1965},
  publisher={JSTOR}
}

@misc{weinstein1975maslov,
  title={On Maslov's quantization condition, Fourier Integral Operators and Partial Differential Equations, J.\@ Chazarain, ed.},
  author={Weinstein, A.\@},
  year={1975},
  publisher={Springer-Verlag, Berlin and New York}
}

@inproceedings{yang2009cuckoo,
  title={Cuckoo search via L{\'e}vy flights},
  author={Yang, X.S.\@ and Deb, S.\@},
  booktitle={2009 World congress on nature \& biologically inspired computing (NaBIC)},
  pages={210--214},
  year={2009},
  organization={Ieee}
}

@incollection{yang2010eagle,
  title={Eagle strategy using L{\'e}vy walk and firefly algorithms for stochastic optimization},
  author={Yang, X.S.\@ and Deb, S.\@},
  booktitle={Nature inspired cooperative strategies for optimization (NICSO 2010)},
  pages={101--111},
  year={2010},
  publisher={Springer}
}

\end{document}